\newtheorem{theorem}{Theorem}
\newtheorem{lemma}[theorem]{Lemma}
\newtheorem{proposition}[theorem]{Proposition}
\newtheorem{remark}[theorem]{Remark}
\DeclareMathOperator*{\argmin}{argmin}
\newcommand{\A}{\mathbf{A}}
\newcommand{\aaa}{\mathbf{a}}
\newcommand{\E}{\mathbf{E}}
\newcommand{\ee}{\mathbf{e}}
\newcommand{\J}{\mathbf{J}}
\newcommand{\jj}{\mathbf{j}}
\newcommand{\N}{\mathbf{N}}
\newcommand{\rr}{\mathbf{x}}
\newcommand{\RR}{\mathcal{R}}
\newcommand{\R}{\mathbb{R}}
\newcommand{\uu}{\mathbf{u}}
\newcommand{\vv}{\mathbf{v}}
\newcommand{\ww}{\mathbf{w}}
\newcommand{\OA}{\mathcal{A}}
\newcommand{\OACC}{\mathcal{A}_{cc}}
\newcommand{\OACS}{\mathcal{A}_{cs}}
\newcommand{\OASC}{\mathcal{A}_{sc}}
\newcommand{\OB}{\mathcal{B}}
\newcommand{\OC}{\mathcal{C}}
\newcommand{\OH}{\mathcal{H}}
\newcommand{\OI}{\mathcal{I}}
\newcommand{\OL}{\mathcal{L}}
\newcommand{\OP}{\mathcal{P}}
\newcommand{\OS}{\mathcal{S}}
\newcommand{\HL}[1]{H_L \left( \Omega_{#1} \right)}
\newcommand{\curl}{\rm{curl}}
\newcommand{\dive}{\rm{div}}
\begin{document}

\title{Monotonicity of the Laplace Transform for Tomography in Dissipative Systems}

\author[Tamburrino, Corbo Esposito, Piscitelli]{Antonello Tamburrino$^{1,2}$, Antonio Corbo Esposito$^1$, Gianpaolo Piscitelli$^3$}

\footnotetext[1]{Dipartimento di Ingegneria Elettrica e dell'Informazione \lq\lq M. Scarano\rq\rq, Universit\`a degli Studi di Cassino e del Lazio Meridionale, Via G. Di Biasio n. 43, 03043 Cassino (FR), Italy.\\
Email: \textrm{corbo@unicas.it, antonello.tamburrino@unicas.it} (corresponding author).}

\footnotetext[2]{EUT+ Institute of Nanomaterials and Nanotechnologies-EUTINN, European University of Technology, European Union.}

\footnotetext[3]{Dipartimento di Scienze Economiche Giuridiche Informatiche e Motorie, Universit\`a degli Studi di Napoli Parthenope, Via Guglielmo Pepe, Rione Gescal, 80035 Nola (NA), Italy.\\
Email: {\rm gianpaolo.piscitelli@uniparthenope.it}.}

\setcounter{tocdepth}{1}

\begin{abstract}
This paper addresses the problem of tomography for the interior of dissipative materials, with a focus on Magnetic Induction Tomography (MIT), a proven technique for imaging the interior of conductive materials using low-frequency electromagnetic fields.

Processing MIT data is mathematically challenging because of the non-linear and ill-posed nature of the underlying inverse problem. On the other hand, the Monotonicity Principle is recognized as the basis for developing effective approaches.

In this framework, the paper presents a principle of monotonicity for the Transfer Operator in Magnetic Induction Tomography, i.e. the operator mapping the Laplace transform of the applied source onto the Laplace transform of the measured quantity. Specifically, it is proved that the Transfer Operator satisfies a Monotonicity Principle when evaluated on a proper real semi-axis of the complex plane. The description of the related (real-time) imaging method is also given.

\vspace{0.1cm}
\noindent {\it Keywords:} 
Magnetic Induction Tomography, Monotonicity Principles, Transfer Function, Inverse Problems, Imaging.\\
{\it AMS subject classifications:} 35K05, 35K10, 35R30, 78A46.

\end{abstract}

\maketitle


\section{Introduction}
Tomography is a technique used to create a 3D image of an object by taking measurements from the outside. It works by sending a signal (like an X-ray) into the object and measuring how it comes out the other side. This can be used to image various structures, such as bones, organs, and even materials.

There are two main types of tomography: hard-field and soft-field. Hard-field tomography utilizes rather energetic fields that travel in straight lines, like X-rays. This makes it easier to create an image of the interior, but it involves ionizing radiation, expensive equipment, and safety issues, among other concerns. In contrast, soft-field tomography uses less energetic fields that can be bent by the material. This makes it more versatile because no ionizing radiation is involved; however, it comes with an increase in the level of difficulty in processing the data to form an image of the interior.

The tomographic method treated in this work is Magnetic Induction Tomography (MIT), also known as Eddy Current Tomography. Magnetic Induction Tomography relies on 
the capability of low-frequency electromagnetic fields to penetrate to some extent the interior of conducting materials. In this tomographic method, a set of electrical currents that circulate in the source region $\Omega_S$ induces electrical currents in the conducting domain under imaging $\Omega_C$ (see \Cref{fig_01_system}). The spatial distribution of the electrical resistivity $\eta$ affects the reaction field measured externally to $\Omega_C$ and, therefore, starting from the reaction field, an attempt is made to reconstruct the electrical resistivity into the interior of the conducting domain.
\begin{figure}
    \centering    \includegraphics[width=0.5\linewidth]{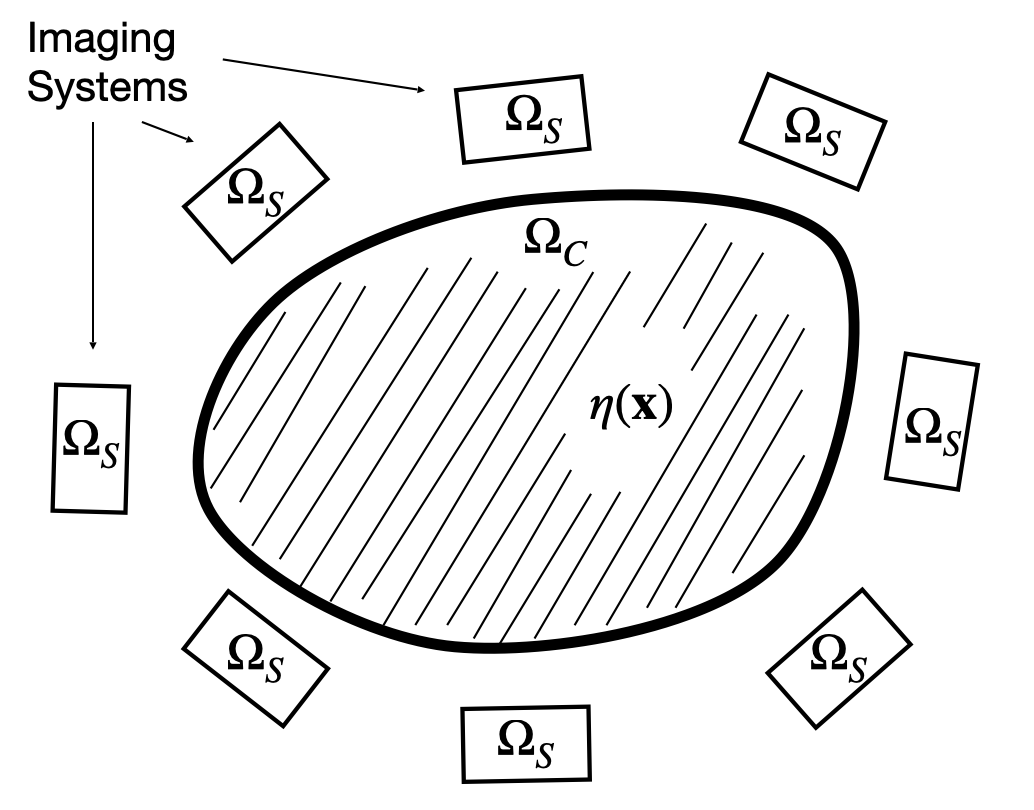}
    \caption{Schematic of a generic system for Magnetic Induction Tomography. A set of electrical currents circulating in the region $\Omega_S$ induces electrical currents in the conducting domain under imaging $\Omega_C$. The spatial distribution of the electrical resistivity $\eta$ affects the reaction field measured in the source region $\Omega_S$.}
    \label{fig_01_system}
\end{figure}

In practical scenarios where it suffices to recover only the support of an anomaly in a known background, non-iterative imaging methods are advantageous due to their real-time capabilities. The fundamental idea in this category of methods involves calculating an indicator, specifically a function of space, that takes distinct values when assessed within or outside the anomalous region. Several non-iterative imaging techniques within this context are recalled here.

Pioneering studies on non-iterative approaches are due to Colton and Kirsch (Linear Sampling Method \cite{colton1996simple}, and Factorization Method \cite{kirsch1998characterization}), originally developed within the realm of inverse scattering problems. In their papers, they characterize the support of an obstacle by considering the range of a suitable operator. Non-iterative methods include Ikeata's work on recovering the convex hull of polygonal cavities \cite{ikehata2004inverse,ikehata2011framework,ikehata2002convex} and Br\"{u}hl's extension of the Factorization Method to electrical resistance tomography \cite{bruhal2001characterization, bruhl2000locating}.
These methods rely on sophisticated mathematical constructions that require meticulous numerical implementations.
A robust numerical convergence criterion for infinite series, particularly sensitive to noise, is essential, as indicated in \cite{bruhl2000locating}.
Kusiak, Sylvester, and Potthast proposed a non-iterative method to find the so-called convex scattering support, a convex subset of the convex hull of the support of an inhomogeneity in a medium \cite{kusiak2003scattering,kusiak2005convex,potthast2003range}. 
In \cite{hanke2012one,griesmaier2014far}, Hanke, Griesmaier, and Sylvester proposed non-iterative methods for the Helmholtz equation for localizing one or several scatterers from far-field data at one fixed incident field and frequency. 
The MUSIC algorithm (MUltiple SIgnal Classification), well-established in signal processing, gained attention for its potential application in imaging, as highlighted by Devaney \cite{Devaney2000,devaney2012mathematical}. Ammari and Lesselier \cite{Ammari2005music} extended the MUSIC algorithm to locate small inclusions buried in a half-space, leveraging scattered amplitude data. Although capable of detecting multiple inclusions simultaneously, this method is limited to cases where inclusions are small and well-separated.

The Monotonicity Principle Method (MPM), which is the main subject of this paper, is a non-iterative imaging method introduced by Tamburrino and Rubinacci in 2002 for electrical resistance tomography (ERT) \cite{tamburrino2002new}.
MPM is based on a proper monotonic relation between the unknown interior material property and the measured external data. The first evidence of this monotonous relationship was found by Gisser, Isaacson, and Newell in 1990 (see \cite{gisser1990electric}).
However, its value as the foundation for a new family of imaging methods was not realized until 2002 \cite{tamburrino2002new}.

MP-based tomography presents several advantages. First, unlike other methods such as Gauss-Newton and similar (see, for instance, \cite{norton1993theory} and \cite{lionheart2003magnetic}), it does not require costly and time-consuming repetitive calculations of the forward problem, making it ideal for \emph{real-time imaging}. Additionally, MP-based algorithms can be easily broken down into smaller tasks, allowing for \emph{parallel processing} and further reduced computational time. Second, the test based on the Monotonicity Principle is \emph{exact} not only in an ideal setting consisting of a continuum of probes \cite{harrach2013monotonicity} but even for a discrete and finite distribution of probes \cite{tamburrino2002new}. Third, MP-based tomography methods are \emph{fully nonlinear}, in the sense that no approximations of the nonlinear operator mapping the unknown quantities into the measured data are introduced, unlike for the (linear) Born approximation \cite{born1926,tamburrinoventrerubinacci_2000,wang2007,zorgati1991eddy,zorgati1992eddy}. Moreover, although MP does not linearize the unknown-data operator, it does not suffer \cite{Garde2017} from the presence of \emph{local minima} and false solutions when the inverse problem is cast in terms of minimization \cite{pierri_1997,soleimani2006abscond,yusa2003corrosion}.  
Fourth, the test required by MPM can be implemented \emph{without any approximation} also in a real-world setting consisting of a finite number of noisy measurements, as opposed to other non-iterative methods, such as the Factorization Method, based on a range test that requires checking if a series is convergent or not (see \cite{bruhl2000locating, bruhal2001characterization}). Fifth, MPM can be \emph{regularized}, resulting in a stable and robust imaging method against noise (\cite{Garde2017}, \cite{mottola2025inverseobstacleproblemnonlinear}). In~\cite{corbo2021monotonicity} and~\cite{corbo2024piecewise}, the Monotonicity Principle has been introduced for nonlinear problems, under quite general assumptions on the material property. The related imaging method, together with realistic numerical examples, can be found in \cite{mottola2024imaging} and the converse in \cite{mottola2025inverseobstacleproblemnonlinear}.

MPM, initially applied in the context of ERT for elliptic problems, was later extended to parabolic partial differential equations (PDEs) governing Magnetic Induction Tomography in both the small skin-depth regime \cite{tamburrino2010recent} and the large skin-depth regime \cite{tamburrino2006fast}, with experimental validations in \cite{tamburrino2012nonite}. The extension of MPM to hyperbolic PDEs, which governs inverse wave propagation scattering problems, provided upper and lower bounds for unknown objects under the constraint of a finite number of measurements (limited aperture data) \cite{tamburrino2002new}. In \cite{harrach2013monotonicity}, it was established that MPM yields the exact shape of anomalies in the ideal scenario of an infinite number of measurements (full aperture data), assuming that each connected component of the anomalies is contractible.

MPM remains an active research topic, showing promise in various applications. Garde et al. \cite{Garde2017} formulated a monotonicity-based shape reconstruction scheme, incorporating regularization against noise and modeling error. Two reconstruction algorithms were proposed and validated using numerical examples with simulated complete electrode model (CEM) data and experimental measurements. In the medical imaging domain, a novel bimodal imaging technique was presented combining breast microwave radar (BMR) and electrical impedance tomography (EIT) methods \cite{tapia2010impedance, tapia2011bimodal}. This technique used a priori information from BMR images to estimate the location of dense breast regions and employed the monotonicity of the impedance matrix to reconstruct the tissue distribution in the breast region.

In \cite{aykroyd2005ert}, Aykroyd et al. investigated the performance of MPM in ERT problems, proposing a modified algorithm that integrates Bayesian modeling and Markov chain Monte Carlo (MCMC) estimation for configurations with a limited number of electrodes and low noise levels. Wallinger et al. \cite{wallinger2009adaptive} combined MPM with Gauss–Newton-based iterative algorithms for electrical capacitance tomography in two-phase scenarios. MPM provided the initial estimate for the Gauss–Newton method, refining the boundaries of anomalies. Harrach et al. \cite{harrach2015ultrasound} developed a monotonicity-based imaging method by combining frequency-difference EIT with ultrasound-modulated EIT, eliminating the need for numerical simulations or forward models.

The monotonicity of time constants in pulsed Magnetic Induction Tomography, initially introduced and numerically validated in \cite{tamburrino2015pect}, led to subsequent publications that developed an imaging method based on the monotonicity of time constants \cite{su2015timedomain, tamburrino2016timedomain, su2017monotonicity, tamburrino2021monotonicity}. These studies demonstrated real-time reconstruction capabilities to profile anomalies within practical 3D structures.

The mathematical framework underlying the treated problem is modeled by a parabolic system (see \Cref{sec:MatMod}), a mathematical context in which it is generally challenging to identify the correct operator that exhibits the MP. The original content of this paper consists of (i) introducing the Transfer Operator $\mathcal{H}_\eta$, a new operator to treat inverse problems in parabolic PDEs, and (ii) proving the Monotonicity Principle for $\mathcal{H}_\eta$.

The Transfer Operator $\mathcal{H}_\eta$ is the operator that maps the Laplace transform of the driving source to the Laplace transform of the measured data. Specifically, $\mathcal{H}_\eta$ evaluated for a complex $p$ is a kind of transfer \lq\lq function\rq\rq \ mapping the Laplace transform of the applied current density to the Laplace transform of the measured electrical field, as discussed in \Cref{transfer_sec}, i.e.,
\begin{equation}
\label{eq:H_defintro}
    \mathcal{H}_{\eta}(p) : 
\J_s(\rr,p) \rightarrow \OP_S \E_R(\rr,p),
\end{equation}
where $\eta$ is the electrical resistivity of the conducting domain under tomographic imaging, $\J_s(\rr,p)$ is the Laplace transform of the applied source current density evaluated at $p \in \mathbb{C}$, $\E_R(\rr,p)$ is the Laplace transform of the measured electrical field, and $\OP_S$ is a projector in a proper functional space.
The Transfer Operator represents the measured data. It generalizes the concept of generalized impedance (in the complex plane) borrowed from Circuit Theory.

The Monotonicity Property for the Transfer Operator $\mathcal{H_{\eta}}$ proved in this paper is:
\begin{equation}
\label{eq:Monotonicity_intro}
    \eta_1 \preceq \eta_2 \implies \OH_{\eta_1} \left( p \right) \preceq \OH_{\eta_2} \left( p \right), 
\end{equation}
for any $p$ real and in the region of convergence for $\OH_{\eta_1}$ and $\OH_{\eta_2}$ (see \Cref{monotransinclu} for details). In \eqref{eq:Monotonicity_intro} $\preceq$ stands for the Loewner order. The impact of a Monotonicity Property in terms of imaging methods was first recognized in \cite{tamburrino2002new} for elliptic PDEs in the context of Electrical Resistance Tomography. In this paper, the discovery of the new form of the Monotonocity Property of \eqref{eq:Monotonicity_intro} allows us to apply the imaging methods developed in the framework of MPM to the operator $\OH_{\eta}$. The foundation of the imaging method comes from the following equivalent form of \eqref{eq:Monotonicity_intro}:
\begin{equation}
\label{eq:MIP}
    \OH_{\eta_1} \left( p \right) \not \preceq \OH_{\eta_2} \left( p \right)  \implies \eta_1 \not \preceq \eta_2. 
\end{equation}
The Relation \eqref{eq:MIP} allows one to infer a relation between $\eta_1$ and $\eta_2$, starting from the measured data, i.e., the Transfer Operator evaluated at $p$.

This paper, which introduces a new operator ($\mathcal{H_{\eta}}$) to treat imaging problems for parabolic PDEs, generalizes the findings from the finite-dimensional approximation of \cite{su2023tranfer} to the continuous case. This generalization requires methods and techniques that are completely different from those previously used for the discrete setting. We refer to \cite{su2023tranfer} for an informal introduction to the proposed approach and for preliminary numerical examples that show the effectiveness of the imaging method based on the Monotonicity Property \eqref{eq:MIP} of the operator $\mathcal{H_{\eta}}$.

The results achieved in this paper are part of a broader research program dedicated to the development of Monotonicity Principles for MIT. The connection with previous studies is graphically represented in \Cref{fig_03_schema}.
\begin{figure}
\centering    \includegraphics[width=0.9\textwidth]{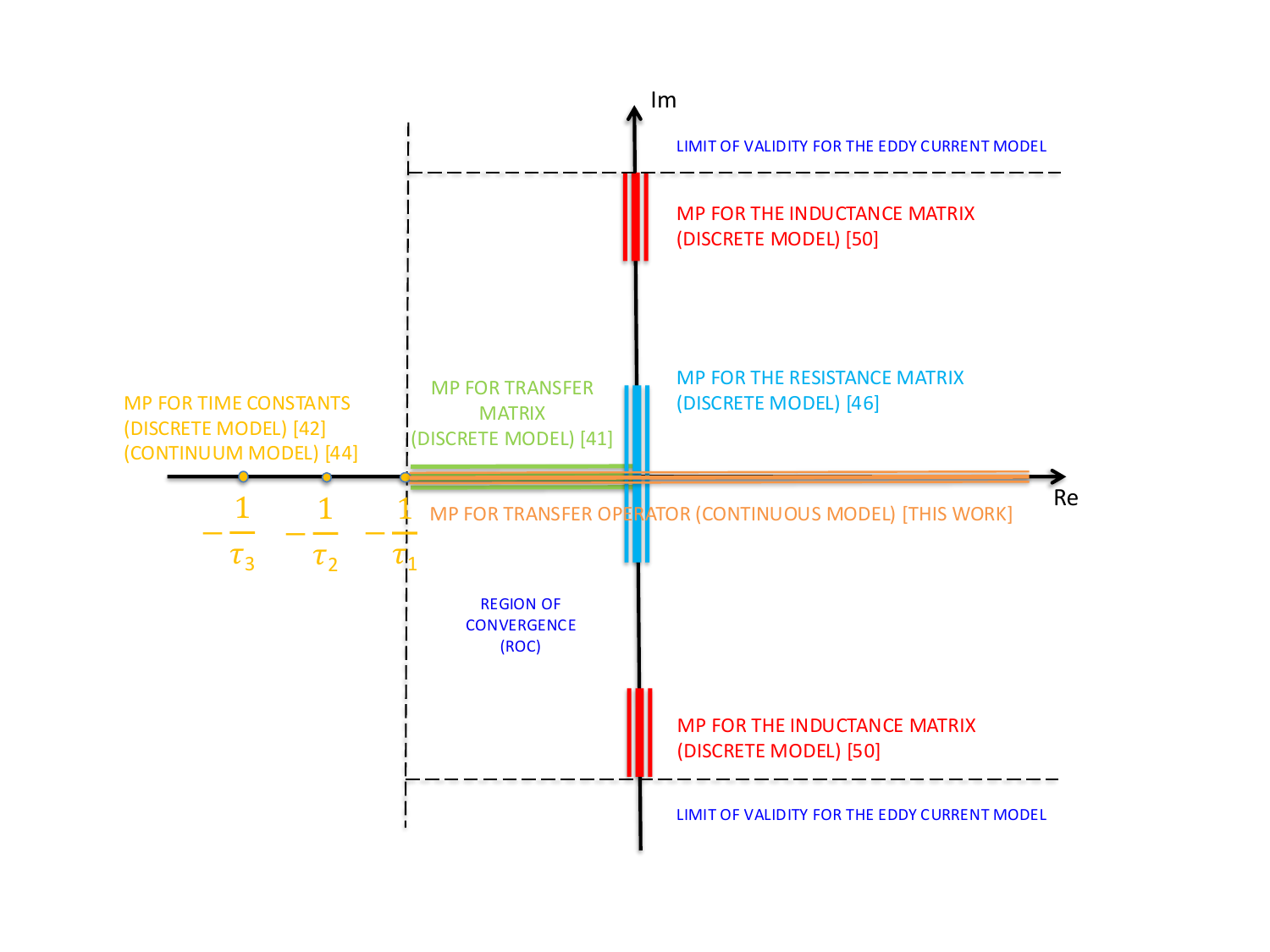}
\caption{Overview of MPs for parabolic PDEs represented in the Laplace plane. In \cite{su2023tranfer}, the data are embedded in exponentially decaying functions that may suffer from a poor signal-to-noise ratio in real-world settings. In \cite{su2017monotonicity,tamburrino2021monotonicity}, an MP for the time constants of the free response was proved. Measuring many time constants is a challenging problem. In \cite{tamburrino2006fast} a MP valid in the limit of small angular frequencies (large skin-depth) was proved. In \cite{tamburrino2010recent} a MP valid in the limit of large angular frequencies (small skin-depth) was proved. In  \cite{su2023tranfer, su2017monotonicity, tamburrino2006fast, tamburrino2010recent}, the MP was proved in a discrete setting, for both the underlying PDE and the measurement system. In this work, MP is proved for (i) quantities that that be robustly measured even in noisy environments (Transfer Operator evaluated on a right real semi-axis), (ii) for an exact model, (iii) for the continuum physical model, and (iv) for an arbitrary measurement system in infinite dimensional space.}
    \label{fig_03_schema}
\end{figure}
In \cite{su2023tranfer}, the MP for the transfer matrix due to a finite set of coils is proved in a discrete setting for sources that decay exponentially. In \cite{su2017monotonicity,tamburrino2021monotonicity}, an MP for MIT under source-free conditions is achieved (Monotonicity of the Time-constants). In contrast, the present contribution proves an MP for the forced response due to general arbitrary terms.
In \cite{tamburrino2006fast}, MP is found for the resistance matrix (real-part of the impedance matrix) measured on a set of prescribed coils (measurement system), in the low frequency limit (large skin-depth operations), while in \cite{tamburrino2010recent}, an MP is derived for the inductance matrix (imaginary-part of the impedance matrix divided by the angular frequency) at sufficiently high angular frequencies (small skin-depth operations). Except for \cite{tamburrino2021monotonicity}, all MPs provided in previous work have been proved for a finite-dimensional approximation of the underlying PDE (see \cite{su2023tranfer, su2017monotonicity, tamburrino2006fast, tamburrino2010recent}). 
The Monotonicity Property of this contribution marks a milestone in this research program. Indeed, it overcomes the limits of the Monotonicity Principles found for time-harmonic operations in \cite{tamburrino2006fast,tamburrino2010recent}, where the monotonicity was proven not for arbitrary operation frequencies but, rather, only in the small skin-depth regime and in the large skin-depth regimes, and overcomes the limits of the Monotonicity Principles for the time constants \cite{su2017monotonicity,tamburrino2021monotonicity}, where it may not be trivial to extract the time constants from the time-domain response.

The structure of the paper is as follows: in \Cref{sec:problem_statement} is described the mathematical model for Magnetic Induction Tomography; in \Cref{transfer_sec} is introduced the Transfer Operator $\mathcal{H_{\eta}}$, while in \Cref{sec:MonoTF}, its monotonicity is proved. In \Cref{method_sec}, a non-iterative imaging method based on the MP of $\mathcal{H_{\eta}}$ is described, and, eventually, in \Cref{conclusion_sec}, conclusions are drawn.

\section{Mathematical Model for Magnetic Induction Tomography}\label{sec:problem_statement}

In this article, we address the issue of determining the configuration of one or multiple anomalies located within the confines of a conductive region $\Omega_C$. This is achieved through the utilization of electromagnetic fields under magneto-quasi-stationary (MQS) conditions, where the displacement current is considered negligible \cite{bossavit1981numerical, bossavit1998computational,haus1989electromagnetic,touzani2014mathematical}. The anomalous region, denoted as $V \Subset \Omega_C$, may exhibit various topologies and shapes, potentially comprising multiple components, and possesses a resistivity distinct from that of the surrounding conducting domain $\Omega_C$. The objective of recovering the shape of the anomaly or anomalies belongs to the area of research, famously known as the inverse obstacle problem. An exemplary scenario is found in Magnetic Induction Tomography (MIT), where the focus lies on detecting "small" defects within an otherwise homogeneous material.

The MIT probing system is highly conceptual yet universally applicable, encompassing all conceivable scenarios within this broad framework. In detail, the domain under consideration, denoted as $\Omega_C$, is imaged using a set of various source current densities $\J_S$ circulating within the source domain $\Omega_S$, as shown in \Cref{fig_02_specimen}. The observed parameter is the (projected) reaction electric field $\E_R$ generated by the current density $\J$ induced within the conductive domain $\Omega_C$.

\subsection{Notations and Settings}
We consider the standard scenario where a conducting domain $\Omega_C$ is surrounded by an insulating material. Consequently, the normal component of the current density $\mathbf{J}$ evaluated on the surface of the conducting domain $\Omega_C$ is assumed to be vanishing.
\begin{figure}[htb!]
\centering
\includegraphics[width=0.5\textwidth]{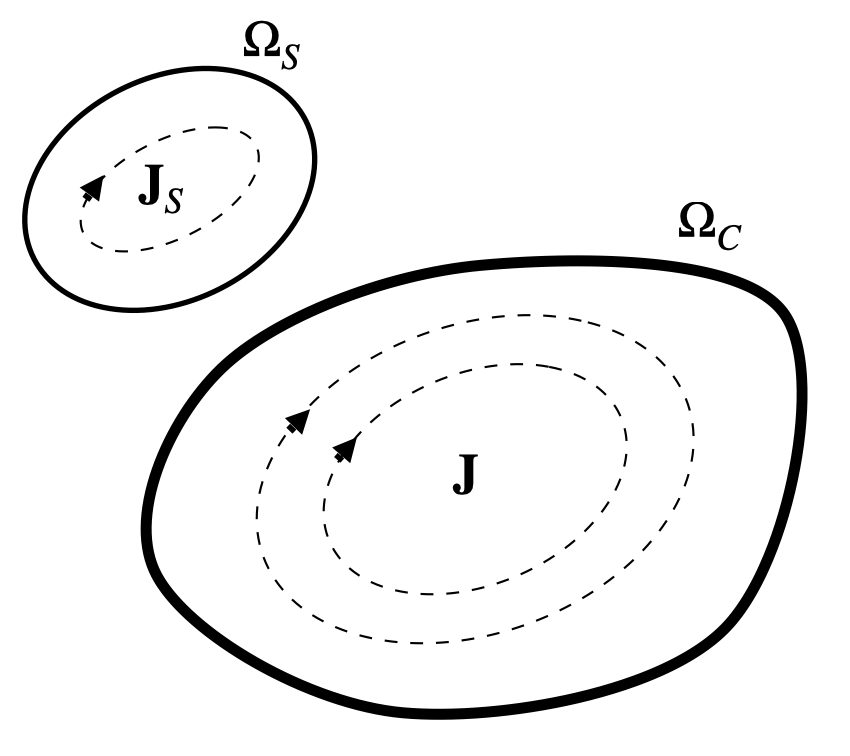}
\caption{Schematic of a typical eddy current problem: a conducting domain $\Omega_C$ and the source region $\Omega_S$. In $\Omega_S$ a source electrical current density $\J_s$ is applied and the (reaction) electric field $\E_R$ produced by the current density induced in $\Omega_C$ is measured. The operator (Transfer Operator) mapping the applied source current density $\J_s$ into $\E_R$ is the measured data.}
\label{fig_02_specimen}
\end{figure}
We recall from \cite{evans2022partial,monk2003finite} the classical functional space $H_{\rm div}(\Omega_C)$
\begin{equation}
    H_{\rm div}(\Omega_C)  =\left\{\mathbf{u}\in L^2 (\Omega_C, \mathbb R^3) \; \vert \; {\rm div} (\mathbf{u})\in L^2(\Omega_C,\mathbb R^3) \right\},
\end{equation}
equipped with the scalar product
\[
\langle\mathbf{u},\mathbf{v}\rangle_{ H_{\rm div}(\Omega_C)}=\int_{\Omega_C} \mathbf{u} (\mathbf{x}) \cdot \mathbf{v}(\mathbf{x}) \mathrm{d}\mathbf{x}+\int_{\Omega_C} {\rm div} (\mathbf{u} (\mathbf{x}) )\ {\rm div} (\mathbf{v}(\mathbf{x})) \mathrm{d}\mathbf{x},
\]
and we define the functional space $H_L\left(\Omega_C \right)$ (see \cite{bossavit1998computational} and \cite{tamburrino2021monotonicity}) as 
\begin{equation}
    H_L\left(\Omega_C \right)  = \left\{  \mathbf{v} \in H_{\rm div}(\Omega_C) \; \vert  \; \nabla \cdot \mathbf{v}=0 \text{ in } \Omega_C, \right.
    \left. \mathbf{v}\cdot \hat{\mathbf{n}} = 0 \text{ on } \partial \Omega_C \right\},  
\end{equation}
equipped with the scalar product inherited from $H_{\rm div}(\Omega_C)$. Since $H_L\left(\Omega_C \right)$ is a divergence free space, it turns out that the inherited scalar product reduces to the standard $L^2 \left( \Omega_C \right)$ inner product:
\begin{equation}
    \langle \mathbf{u},\mathbf{v} \rangle_C = \int_{\Omega_C} \mathbf{u} (\mathbf{x}) \cdot \mathbf{v}(\mathbf{x}) \mathrm{d}\mathbf{x}.
\end{equation}
Similarly, it is possible to define $H_{\rm div} (\Omega_S)$ and $H_L\left(\Omega_S \right)$.

Moreover, we set
\[
L^\infty_+(\Omega_C):=\{\theta\in L^\infty(\Omega_C)\ |\ \theta\geq c_0\ \text{a.e. in}\ \Omega_C, \ \text{for a positive constant}\ c_0\}.
\]

\subsection{Model Problem}\label{sec:MatMod}
In this section, a summary of the mathematical model for the Eddy Current problem is provided.

Let ${\bf E}$, ${\bf B}$, ${\bf H}$, and ${\bf J}$ be the electric field, the magnetic flux density, the magnetic field, and the electric current density, respectively. Under the Magneto-Quasi-Stationary approximation, Maxwell's equations reduce to (see, e.g., \cite{touzani2014mathematical}):
\begin{align}
&\label{third_max}
{\curl}\ {\mathbf{E}}(\mathbf{x},t) =-\partial_t{\bf B}(\mathbf{x},t)\qquad\qquad\textrm{in}\ \R^3\times [0,+\infty[,\\
\label{fourth_max}
&{\curl}\ {\bf H}(\mathbf{x},t) = {\bf J} (\mathbf{x},t)+{\bf J}_S(\mathbf{x},t)
\quad\ \textrm{in} \ \R^3\times [0,+\infty[ ,\\
\label{second_max}
&{\dive}\ {\bf B}(\mathbf{x},t)  = 0\qquad\qquad\qquad\qquad\ \textrm{in}\ \R^3\times [0,+\infty[,\\
\label{Ohm} 
 & {\bf J} (\mathbf{x},t) =
\begin{cases}
 \eta^{-1}(\mathbf{x}) \ {\bf E}(\mathbf{x},t) \qquad\quad\ \   \textrm{in}\ \Omega_C\times [0,+\infty[\\
 0 \qquad\qquad\qquad
 \qquad\quad\ \textrm{in} \ (\R^3\setminus\Omega_C)\times [0,+\infty[,
\end{cases}\\
\label{dpermH}
& {\bf B}(\mathbf{x},t)=\mu_0 {\bf H}(x,t)\qquad\qquad\qquad\  \textrm{in}\ \R^3\times [0,+\infty[,\\
\label{inf_van}
& |{\bf B}(\mathbf{x},t)|=O(|\mathbf{x}|^{-3})\ \textrm{uniformly for $t\in[0,+\infty[$, as} \ |\mathbf{x}| \to +\infty,
\end{align}
where $\eta \in L_+^\infty(\Omega_C)$ denotes the electrical resistivity of the conductor, $\mu_0$ is the magnetic permeability of the free space and ${\bf J}_S$ represents the prescribed source current density. Moreover, it is assumed that there are no magnetic materials in the system. Equations~(\ref{third_max}), (\ref{fourth_max}), and (\ref{second_max}) have to be interpreted in the weak sense, to automatically account for the discontinuities of the fields across material interfaces~\cite{bossavit1998computational}.

The problem described by~(\ref{third_max})--(\ref{inf_van}) admits an equivalent integral formulation (see \cite{bossavit1981numerical}). 
Specifically, the integral formulation in the weak form for the Eddy Current problem is to find $\mathbf{J}\in L^2\left(0,T;H_L\left( \Omega_C \right)\right)$ such that
\begin{equation}
\label{governing_eq}
\begin{split}
    \langle \ww,\eta \J(\cdot,t) \rangle_C  =  - \langle \ww, \partial_t \OACC \left[ \J\right(\cdot,t)] \rangle_C  - & \langle \ww , \partial_t\A_s(\cdot,t) \rangle_C,
 \end{split}
\end{equation}
for all $\ww \in H_L\left(\Omega_C\right)$ and almost all $0\leq t \leq T$.  Here $\eta$ is the point-wise resistivity, $\OACC$ is a spatial integral operator defined as
\begin{equation}
\OACC:\mathbf{v} \in H_L\left(\Omega_C \right)\rightarrow \frac{\mu_0}{4\pi}\int_{\Omega_C} \frac{\mathbf{v}(\mathbf{y})}{\|\mathbf{y}-\mathbf{x}\|}\mathrm{d}\mathbf{y} \in L^2\left(\Omega_C ;\mathbb{R}^3\right),  \end{equation}
and $\A_s$ is the magnetic vector potential due to the assigned source current density $\mathbf{J}_S$. To be more precise,
\begin{equation}
\label{as}
    \A_s(\cdot,t) = \OACS \left[ \J_s(\cdot,t) \right],
\end{equation}
where the operator $\OACS$ is defined as
\begin{equation}\label{acs}
\OACS:\mathbf{v} \in H_L\left(\Omega_S\right)\rightarrow \frac{\mu_0}{4\pi}\int_{\Omega_S} \frac{\mathbf{v}(\mathbf{y})}{\|\mathbf{y}-\mathbf{x}\|}\mathrm{d}\mathbf{y} \in L^2\left(\Omega_C;\mathbb{R}^3\right).
\end{equation}

For a comprehensive discussion on the functional spaces associated with Maxwell's equations, we refer to~\cite{bossavit1998computational,bossavit1981numerical}.

To state and prove the next result, it is convenient to introduce an additional operator
\begin{equation}
\OASC:\mathbf{v} \in H_L\left(\Omega_C\right)\rightarrow \frac{\mu_0}{4\pi}\int_{\Omega_C} \frac{\mathbf{v}(\mathbf{y})}{\|\mathbf{y}-\mathbf{x}\|}\mathrm{d}\mathbf{y} \in L^2\left(\Omega_S;\mathbb{R}^3\right).
\end{equation}
We have the following Proposition whose demonstration is a straightforward consequence of the definitions of corresponding operators.
\begin{proposition}
\label{rmk:sym}
The operator $\OA_{sc}$ is the adjoint of $\OA_{cs}$, i.e.
\begin{equation}
    \langle  \OA_{sc} \ww_c , \ww_s \rangle_S = \langle \ww_c,  \OACS \ww_s \rangle_C,
    \ \forall \ww_c \in H_L \left( \Omega_C \right), \ww_s \in H_L \left( \Omega_S \right).
\end{equation}
\end{proposition}

\begin{remark}
\label{rmk:rmterm}
 Taking into account \eqref{as}, and with the help of \Cref{rmk:sym}, the second term on the r.h.s. of \eqref{governing_eq} can be written as 
\begin{equation}
    \langle \ww , \partial_t\A_s(\cdot,t) \rangle_C = \langle \OASC \left[ \ww \right], \partial_t\J_S(\cdot,t) \rangle_S.
\end{equation}
\end{remark}

\subsection{Uniqueness of Weak Solution}
In this section, the uniqueness of the weak solution of \eqref{governing_eq} is proved by using classical techniques. Let $\mathbf{J}_1,\;\mathbf{J}_2\in L^2\left(0,T;H_L\left( \Omega_C \right)\right)$ be two weak solutions of \eqref{governing_eq}. Then, $\mathbf{J}_0=\mathbf{J}_1 - \mathbf{J}_2$ in  $L^2\left(0,T;H_L\left( \Omega_C \right)\right)$ solves the variational problem 
\begin{equation}
\begin{split}
    \langle \ww,\eta \J_0(\cdot,t) \rangle_C  =  - \langle \ww, \partial_t \OACC \left[ \J_0\right(\cdot,t)] \rangle_C,
 \end{split}
 \label{governing_eq_hom}
\end{equation}
for every test function $\ww \in H_L\left(\Omega_C\right)$ and almost every $0\leq t \leq T$ with initial condition $\J_0(\mathbf{x},0)=0$ for almost every $\mathbf{x} \in \Omega_C$. Using arguments of \cite{tamburrino2021monotonicity}, it can be proved that if  $\mathbf{J}_0\left(\mathbf{x},t\right)\in L^2\left(0,T;H_L\left( \Omega_C \right)\right) $ is a solution of the homogeneous system \eqref{governing_eq_hom}, then it can be expressed as
\begin{equation}
\label{eq:sum_f_hom}
\mathbf{J}_0\left(\rr,t\right)  = \sum_{n=1}^\infty i_n\left(t\right)\ {\bf j}_n (\rr)\quad\textrm{in}\ \Omega_C \times [0,T],
\end{equation}
where $\jj_n$s are the normalized solutions of the following Generalized Eigenvalue Problem
\begin{equation}
\label{eq:modes}
    \langle \OACC \mathbf{j}_{n},\ww_c \rangle_C  =\tau_{n}(\eta) \langle \eta \mathbf{j}_{n},\ww_c \rangle_C, \ \forall \ww_c \in \HL{C},
\end{equation}
and $i_n$s are solutions of \begin{eqnarray}
\label{eq:ode_f2}
r_n i_n+l_n i_n'=0 \ \forall n \in \mathbb{N},
\end{eqnarray}
where $r_n=\left\langle \eta {\bf j}_n, {\bf j}_n\right\rangle_C$, $l_n=\left\langle \OACC {\bf j}_n,{\bf j}_n \right\rangle_C$ and time constants $\tau_n = l_n/r_n$ are finite, real, non-negative, ordered in decreasing order and converging to zero. Since $\J_0(\cdot,0)\equiv 0$ and $\{\mathbf{j}_{n}\}_{n\geq1}$ is a complete $\eta$-orthonormal set in $\HL{C}$, the initial condition $i_n (0)=0$ for all $n\geq1$ is obtained. Since $i_n(t)=i_n(0) e^{-t / \tau_n}$ from \eqref{eq:ode_f2}, it follows that $i_n\left(t\right) = 0$ for every $0\leq t \leq T$ and $n\geq 1$. This proves that $\mathbf{J}_0= 0$ for almost every $x \in \Omega_C$ and $0 \leq t \leq T$ and, hence, the uniqueness of the weak solution of the system \eqref{governing_eq}.

\subsection{Projection Operators}\label{pro_ope_subsec}
Since $H_L\left(\Omega_C \right)$ is a closed subspace of $L^2 \left( \Omega_C; \mathbb{R}^3 \right)$, its inner product naturally induces a projection operator
\begin{equation}
    \OP_C: \ww \in L^2 \left( \Omega_C; \mathbb{R}^3 \right) \rightarrow \ww_c \in H_L \left( \Omega_C \right),
\end{equation}
where $\ww_c = \argmin_{\uu \in H_L \left( \Omega_C \right)} \langle \ww - \uu , \ww - \uu \rangle_C$. It is worth noting that $\ww_c = \OP_C \ww$ is the unique element of $H_L \left( \Omega_C \right)$ such that
\begin{equation}
    \langle \ww, \uu \rangle_C = \langle \ww_c, \uu\rangle_C, \ \forall \uu \in H_L \left( \Omega_C \right).
\end{equation}
Similarly, one can define $\OP_S$, the projection operator over $\HL{S}$.

Therefore, \eqref{governing_eq} can be cast in the operator form
\begin{equation}
\label{GOV_PDE_Proj}
    \OP_C \left( \eta \OI + \OACC \partial_t \right) \OP_C \J  =  - \OP_C \OACS \partial_t \J_S,
\end{equation}
where $\OI$ is the identity operator, and \eqref{as} has been for the rightmost term.

The projection operator $\OP_C$ applied to $\J$ is \emph{not} strictly required but makes clear the symmetric structure of the operator on the l.h.s. of \eqref{governing_eq}.

\subsection{Measured Quantities}
Let $\E_R(\rr,t)$ be the reaction electric field, i.e., the electric field produced by the current density $\J$. The quantity measured is the projection of the reaction electric field $\E_R(\rr,t)$ on $\HL{S}$. We denote it by $\OP_S \E_R$.

Since $\E_R(\rr,t)=-\partial_t \A_R(\rr,t) - \nabla \varphi_R (\rr,t)$, where $\A_R=\OASC \J$ is the magnetic vector potential and $\varphi_R$ is the electric scalar potential (see \cite{haus1989electromagnetic}), by exploiting $\int_{\Omega_S} \nabla \varphi_R \cdot \J' \mathrm{d}V = 0$ for every $\J' \in \HL{S}$, it turns out that
\begin{align}
\label{eq:vectpot}
   \langle \E_R(\cdot,t), \ww_s \rangle_S & = -\langle \partial_t \A_R(\cdot,t), \ww_s \rangle_S\\
\label{eq:vectpot2}
    & = - \langle \partial_t \OASC \left[ \J(\cdot,t) \right], \ww_s \rangle_S
\end{align}
for every $\ww_s \in \HL{S}$, $\langle \cdot,\cdot \rangle_S$ representing the usual inner product defined on $L^2 \left( \Omega_S \right)$ .
As a consequence, it results
\begin{align}
    \OP_S \E_R & = - \OP_S \partial_t \A_R \\
\label{prj_er}
    & = - \OP_S \OASC \partial_t \J.
\end{align}
Equation \eqref{eq:vectpot} suggests another observable quantity, namely $\A_R$ or, more precisely, its projection $\OP_S \A_R$:
\begin{equation}
\label{prj_ar}
    \OP_S \A_R \\
     = \OP_S \OASC \J.
\end{equation}

\subsection{Structure of the Solution}In \cite{tamburrino2021monotonicity}, it has been shown that if  $\mathbf{J}\left(\rr,t\right)\in L^2\left(0,T;H_L\left( \Omega_C \right)\right) $ is a solution of \eqref{governing_eq}, then it can be expressed as:
\begin{equation}
\label{eq:sum_f}
\mathbf{J}\left(\rr,t\right)  = \sum_{n=1}^{+\infty} i_n\left(t\right)\ {\bf j}_n (\rr)\quad\textrm{in}\ \Omega_C \times [0,T],
\end{equation}
where the $\jj_n$s are the solutions of \eqref{eq:modes} and the $i_n$s are the solutions of \begin{eqnarray}
r_n i_n+l_n i_n'=\mathcal{E}_n \ \forall n \in \mathbb{N},
\end{eqnarray}
where $\mathcal{E}_n=-\langle\partial_{t}{\bf A}_S, {\bf j}_n\rangle_C$ and $\tau_n,$ $r_n,$ $l_n$ are the same as in \eqref{eq:ode_f2}.

By linearity and \eqref{eq:sum_f}, the solution of \eqref{governing_eq} can be written as
\begin{equation}
\label{eq:sol_split}
\mathbf{J}\left(\rr,t\right)  = \sum_{n=1}^{+\infty} c_n e^{-t/\tau_n} {\bf j}_n (\rr) + \J_F(\rr,t) \ \textrm{in}\ \Omega_C \times [0,T],
\end{equation}
where the series on the r.h.s. corresponds to the source-free response, while $\J_F$ is the forced response, i.e., a particular solution of \eqref{governing_eq} related to the specific prescribed source $\J_S$. The initial condition $\left( \J(\rr,t=0)) \text{ prescribed}\right)$ is imposed by the constants $c_n$s:
\begin{equation}
     c_m = \frac{\langle \J (\rr,0) - \J_F(\rr,0), \eta \jj_m \rangle_C}{\langle \jj_m, \eta \jj_m \rangle_C}, \ \forall m \in \mathbb{N}.
\end{equation}
\begin{proposition}
It is worth noting that
\begin{equation}
\label{eq:large_t}
 \left\lVert \sum_{n=1}^{+\infty} c_n e^{-t/\tau_n} {\bf j}_n (\rr) \right\rVert_C = \mathcal{O}\left(\mathrm{e}^{-t/\tau_1} \right), \textrm{ when } t \rightarrow + \infty.
\end{equation}
\end{proposition}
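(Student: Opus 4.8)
The plan is to estimate the source-free series in the resistivity-weighted (energy) inner product, in which the modes $\jj_n$ form an orthonormal basis, and then transfer the bound to the $L^2$-norm $\langle\cdot,\cdot\rangle_C$ using the positive lower bound on $\eta$. Throughout, write $w(\rr,t):=\sum_{n=1}^{+\infty} c_n e^{-t/\tau_n}\jj_n(\rr)$ for the series in question and set $\langle u,v\rangle_\eta:=\langle \eta u,v\rangle_C$, with associated norm $\lVert\cdot\rVert_\eta$; under the standing assumptions $0<\eta_{\min}\le\eta\le\eta_{\max}<+\infty$ this is an inner product on $H_L(\Omega_C)$ equivalent to the $L^2$ one.

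First I would recall, from the uniqueness argument, that $\{\jj_n\}_{n\ge1}$ is a complete $\eta$-orthonormal family in $H_L(\Omega_C)$, i.e. $\langle\jj_m,\jj_n\rangle_\eta=\delta_{mn}$, and that, by the normalization $\langle\jj_m,\jj_m\rangle_\eta=1$, the constants $c_m$ are exactly the $\eta$-Fourier coefficients $c_m=\langle\J(\cdot,0)-\J_F(\cdot,0),\jj_m\rangle_\eta$ of the initial datum $\J(\cdot,0)-\J_F(\cdot,0)\in H_L(\Omega_C)$. Parseval's identity then gives $\sum_{n=1}^{+\infty}c_n^2=\lVert\J(\cdot,0)-\J_F(\cdot,0)\rVert_\eta^2=:M^2<+\infty$. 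Next, expanding in the $\eta$-orthonormal basis,
\begin{equation*}
\lVert w(\cdot,t)\rVert_\eta^2=\sum_{n=1}^{+\infty}c_n^2\,e^{-2t/\tau_n}.
\end{equation*}
Since the time constants $\tau_n$ are finite, strictly positive, and arranged in decreasing order, $\tau_1=\max_n\tau_n$, hence $e^{-2t/\tau_n}\le e^{-2t/\tau_1}$ for all $n$ and all $t\ge0$; therefore $\lVert w(\cdot,t)\rVert_\eta^2\le M^2 e^{-2t/\tau_1}$. Finally, from $\eta\ge\eta_{\min}$ one has $\eta_{\min}\lVert v\rVert_C^2\le\lVert v\rVert_\eta^2$, so that
\begin{equation*}
\lVert w(\cdot,t)\rVert_C\le \eta_{\min}^{-1/2}\,M\,e^{-t/\tau_1}\qquad\text{for every }t\ge0,
\end{equation*}
which is the asserted $\mathcal{O}(e^{-t/\tau_1})$ as $t\to+\infty$, with explicit constant $\eta_{\min}^{-1/2}\lVert\J(\cdot,0)-\J_F(\cdot,0)\rVert_\eta$.

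The estimate itself is short, and I do not expect a genuine obstacle: the points that need to be in place are all inherited from the preceding analysis, so the work is bookkeeping rather than a new difficulty. Namely: (i) that $\{\jj_n\}$ is a complete $\eta$-orthonormal system, so that Parseval applies and $M<+\infty$, together with the fact that $\J(\cdot,0)-\J_F(\cdot,0)$ indeed lies in $H_L(\Omega_C)$; (ii) that $\tau_1$ is a finite and strictly positive maximum of $\{\tau_n\}_n$, the strict positivity coming from the positive definiteness of $\OACC$ on $H_L(\Omega_C)$ together with $\eta\ge\eta_{\min}>0$; and (iii) the equivalence of $\lVert\cdot\rVert_\eta$ and $\lVert\cdot\rVert_C$, i.e. the two-sided bound on $\eta$. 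If one prefers to avoid the weighted norm, the same conclusion follows by factoring $w(\cdot,t)=e^{-t/\tau_1}\sum_n c_n e^{-t(1/\tau_n-1/\tau_1)}\jj_n$ and bounding the remaining sum uniformly in $t\ge0$ by the identical orthonormality argument; this is precisely the step where the lower bound on $\eta$ is unavoidable, since $\{\jj_n\}$ is not $L^2$-orthogonal.
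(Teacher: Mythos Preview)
Your proof is correct and takes a genuinely different, and cleaner, route than the paper's.

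The paper's argument begins with the triangle inequality
\[
\Bigl\lVert \sum_{n} c_n e^{-t/\tau_n}\jj_n \Bigr\rVert_C \le \sum_{n} |c_n|\, e^{-t/\tau_n}\,\lVert\jj_n\rVert_C,
\]
then bounds $|c_n|\,\lVert\jj_n\rVert_C$ uniformly in $n$ via Cauchy--Schwarz and the two-sided bounds $\eta_L\le\eta\le\eta_U$, reducing the question to the convergence of $\sum_n e^{-t/\tau_n}$. To control this last series the paper invokes an \emph{a priori} bound on the multiplicity of each eigenvalue and a ratio-test argument on the strictly decreasing subsequence of distinct $\tau_n$'s; only then can $e^{-t/\tau_1}$ be factored out and the residual series shown to be bounded for $t>0$.

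Your approach instead exploits the $\eta$-orthonormality of the modes, computing directly $\lVert w(\cdot,t)\rVert_\eta^2=\sum_n c_n^2 e^{-2t/\tau_n}$ and bounding it by $e^{-2t/\tau_1}\sum_n c_n^2$, with $\sum_n c_n^2<\infty$ by Parseval. This bypasses entirely the multiplicity and ratio-test arguments, uses no information about the $\tau_n$'s beyond $\tau_n\le\tau_1$, yields an explicit constant valid for all $t\ge0$ (not only asymptotically), and is robust: it does not depend on the eigenvalue multiplicities being uniformly bounded. The paper's approach, by contrast, would give the same $\mathcal{O}(e^{-t/\tau_1})$ conclusion even if the $c_n$ were replaced by any bounded sequence, so it trades more spectral input for less dependence on the particular coefficients.
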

\begin{proof}
Indeed:
\[
\begin{split}
\left\lVert \sum_{n=1}^{+\infty} c_n e^{-t/\tau_n} {\bf j}_n (\rr) \right\rVert_C & \leq     \sum_{n=1}^{+\infty} |c_n| e^{-t/\tau_n}\left\lVert  {\bf j}_n (\rr) \right\rVert_C \\
&\leq \sum_{n=1}^{+\infty}\left|  \frac{\langle \J (\rr,0) - J_F(\rr,0), \eta \jj_n \rangle_C}{\langle \jj_n, \eta \jj_n \rangle_C} \right|e^{-t/\tau_n}  \left\lVert  {\bf j}_n (\rr) \right\rVert_C\\
&\leq\frac{1}{\eta_L\left\lVert  {\bf j}_n (\rr) \right\rVert_C}\sum_{n=1}^{+\infty}  \left|\langle \J (\rr,0) ,\eta \jj_n \rangle_C- \langle J_F(\rr,0), \eta \jj_n \rangle_C \right|e^{-t/\tau_n} \\
&\leq\frac{1}{\eta_L\left\lVert  {\bf j}_n (\rr) \right\rVert_C}\sum_{n=1}^{+\infty}(\eta_U  \lVert \J (\rr,0)\rVert_C \lVert \jj_n \rVert_C +\eta_U \lVert J_F(\rr,0)\rVert_C\lVert \jj_n \rVert_C) e^{-t/\tau_n} \\
&\leq \frac{\eta_U}{\eta_L} \left(\lVert \J (\rr,0)\rVert_C + \lVert J_F(\rr,0)\rVert_C\right) \sum_{n=1}^{+\infty} e^{-t/\tau_n},
\end{split}
\]
where $\eta_U$ and $\eta_L$ are the upper and lower bounds of $\eta$, respectively.
Let $\{\tau_{n_j}\}_{j\in\N}$ be a subsequence that contains all time constants without any repetition. Considering that the time constants are eigenvalues of \eqref{eq:modes} and that the operator $\mathcal A_{cc}$ is a linear operator, it is easily seen that the algebraic multiplicity is lower than the dimension $d$ of the host space for each connected component (see, e.g., \cite[Chap. XIII]{reed1978iv} and \cite[Chap. 1]{henrot2006extremum}). 
In the present case, the algebraic multiplicity of the eigenvalues is not larger than $3k$, since $d=3$ is the dimension of the host space, where $k$ is the number of connected components. Thus
\[
\sum_{n=1}^{+\infty} e^{-t/\tau_n}\leq 3k \sum_{j=1}^{+\infty} e^{-t/\tau_{n_j}}< + \infty,
\]
because the series to the right of the first inequality is convergent by the ratio test. Indeed, $e^{-t/\tau_{n_{j+1}}} / e^{-t/\tau_{n_j}} < 1$ since $\tau_{n_{j+1}}$ is strictly smaller than $\tau_{n_{j}}$.

Finally, it results that
\[
\sum_{n=1}^{+\infty} e^{-t/\tau_n} =  m_1 e^{-t/\tau_1}+e^{-t/\tau_1} \sum_{j=m_1+1}^{+\infty} e^{-t\left(1/\tau_{n}-1/\tau_{1}\right)},
\]
where $m_1$ is the multiplicity of $\tau_1$. The series on the r.h.s. is convergent for any $t>0$, as can be verified by using the ratio test. Moreover, since the series at the r.h.s. is monotonically decreasing, the convergence at an arbitrary $t>0$, gives the conclusion \eqref{eq:large_t}.
\end{proof}

\section{The Transfer Operator}
\label{transfer_sec}

This section is organized into three main parts.
In \Cref{subsec_exp}, the forced response is derived for problem \eqref{governing_eq} when the applied current density $\J_S$ is exponential in time, and the Transfer Operator is introduced. In \Cref{subsec:lap} the interpretation of the Transfer Operator is given in terms of the Laplace Transform. In \Cref{subsec:Inv} the existence of the Transfer Operator is proved.

\subsection{Exponential waveforms}\label{subsec_exp}
In this section, we analyze the existence of the forced response of the system \eqref{governing_eq}, when the applied current density $\J_S$ is exponential in time, i.e. we assume
\begin{equation}
\label{eq:expJS}
    \J_S(\rr,t) = \jj_S(\rr) e^{\lambda t},
\end{equation}
where $\lambda$ is a real number. 
 Combining \eqref{as}, \eqref{acs} and \eqref{eq:expJS}, we get
\begin{equation}\label{asexpo}
    \A_s(\mathbf{x},t) = e^{\lambda t}\OACS \left[ \jj_S(\cdot) \right] =e^{\lambda t} \frac{\mu_0 }{4\pi}\int_{\Omega_S} \frac{\jj_S(\mathbf{y})}{\|\mathbf{y}-\mathbf{x}\|}\mathrm{d}\mathbf{y},
\end{equation}
Due to the linearity of the system, the forced response $\J_F$ for the eddy current density is of the same exponential type
\begin{equation}
\label{eq:expJ}
    \J_F(\rr,t) = \jj(\rr) e^{\lambda t},
\end{equation}
where $\jj \in H_L\left(\Omega_C \right)$ solves the variational problem
\begin{equation}
\label{eq:GovWF_exp}
    \left \langle \ww_c,\left( \eta \OI  +\lambda \OACC \right) \jj \right \rangle_C   = 
    -\lambda \langle \ww_c , \OACS \jj_S \rangle_C, \ \forall
    \ww'_c \in H_L\left(\Omega_C \right). 
\end{equation}

The measured fields $\A_R$ and $\E_R$ are
\begin{align}
\label{eq:er}
    \A_R(\rr,t) & =\aaa_R(\rr) e^{\lambda t}
    \\
\label{eq:er2}
    \E_R(\rr,t) & =\ee_R(\rr) e^{\lambda t},
\end{align}
where, thanks to \eqref{prj_er} and \eqref{prj_ar}
\begin{align}
\label{eq:er1}
    \langle \ee_R, \ww_s \rangle_S & = -\lambda \langle  \OASC \jj, \ww_s \rangle_S,
    \\
\label{eq:ar}
    \langle  \aaa_R, \ww_s \rangle_S & =
    \langle  \OASC \jj, \ww_s \rangle_S.
\end{align}

In operator form \eqref{eq:GovWF_exp}, \eqref{eq:er1} and \eqref{eq:ar} are
\begin{align}
\label{GOV_exp_Proj}
    \OP_C \left( \eta \OI + \lambda \OACC  \right) \OP_C \jj &  =  -\lambda \OP_C \OACS \jj_S,
    \\
    \OP_S \ee_R & = -\lambda  \OP_S \OASC \jj,
    \\
    \OP_S \aaa_R & = \OP_S \OASC \jj.
\end{align}
Therefore, if the operator appearing on the l.h.s. of \eqref{GOV_exp_Proj} is invertible (see \Cref{subsec:Inv}), it turns out that
\begin{equation}
\label{eq:pser}
    \OP_S \ee_R = \OH_\eta \jj_S,
\end{equation}
where $\OH_\eta: H_L\left(\Omega_S \right) \to H_L\left(\Omega_S \right)$ is defined as follows
\begin{equation}
\label{eq:H_def}
    \OH_\eta = \lambda  \OP_S \OASC \left[ \OP_C \left( \eta \OI + \lambda \OACC  \right) \OP_C \right]^{-1} \lambda \OP_C \OACS.
\end{equation}

Combining \eqref{eq:sol_split}, \eqref{eq:large_t}, \eqref{eq:er2}, \eqref{eq:pser} and \eqref{eq:H_def}, it results that
\begin{equation}
\label{eq:ER_larget}
\OP_S \E_R  = \mathcal{O}\left(\mathrm{e}^{-t/\tau_1(\eta)} \right) + \OH_\eta \jj_S \, e^{\lambda t}, \textrm{ for } t \rightarrow + \infty.
\end{equation}
When $\lambda > -1/{\tau_1(\eta)}$, the second term on the r.h.s. of \eqref{eq:ER_larget} is dominant for large $t$ and can be experimentally measured.

\subsection{Laplace Transform}
\label{subsec:lap}
 Taking the Laplace transform of \eqref{governing_eq}, it results that
\begin{equation}
\label{eq:GovWF_Lap}
    \langle \ww'_c,\left( \eta \OI + p \OACC  \right) \J(\cdot,p) \rangle_C  = - p \langle  \ww'_c, \OACS  \left[ \J_S(\cdot,p) \right] \rangle_C, \ \forall
    \ww'_c \in H_L\left(\Omega\right),  
\end{equation}
where
\begin{equation}
\begin{split}
    \J(\cdot,p) & = \OL \left[ \J(\cdot,t) \right](p)\\
    \J_S(\cdot,p) & = \OL \left[ \J_S(\cdot,t) \right](p),\\ 
\end{split}
\end{equation}
being $\OL$ the Laplace transform operator, and $p$ the complex variable.
Similarly,
\begin{align}
\label{eq:ER_Lap}
    \langle \E_R(\cdot,p), \ww_s \rangle_S & = -p \langle  \OASC \left[ \J(\cdot,p) \right], \ww_s \rangle_S,
    \\
\label{eq:AR_Lap}
    \langle  \A_R(\cdot,p), \ww_s \rangle_S & = \langle  \OASC \left[ \J(\cdot,p) \right], \ww_s \rangle_S.
\end{align}

Equations \eqref{eq:GovWF_Lap}, \eqref{eq:ER_Lap}, and \eqref{eq:AR_Lap} are equal to \eqref{eq:GovWF_exp}, \eqref{eq:er1}
 and \eqref{eq:ar}, respectively, for $p=\lambda$. Therefore, the operator $\OH_\eta$ defined in \eqref{eq:H_def} is the Transfer Operator, i.e., the mapping $\J_S(\rr,p) \rightarrow \OP_S \E_R(\rr,p)$ evaluated at $p=\lambda$. Hereafter we will use the symbol $\OH_\eta$, for this operator, i.e.
\begin{equation}
\label{H_eta_def}
\OH_\eta: p\in \mathbb C \mapsto \OH_\eta(p) \in \mathfrak L (H_L\left(\Omega_S \right),H_L\left(\Omega_S) \right).
\end{equation}

\subsection{Existence of $\OH_\eta$}
\label{subsec:Inv}
In this Section, we prove that the solution of \eqref{eq:GovWF_exp} exists and is unique, if $\lambda$ is larger than $-1/\tau_1(\eta)$ where $\tau_1$ is the largest eigenvalue for problem \eqref{eq:modes}. The following results hold.
\begin{proposition}
\label{prop:eta_pos}
The operator\footnote{Hereafter, the same symbol $\eta$ is used to represent both a function $\eta(x)$ and the corresponding multiplication operator $\eta : f(x) \mapsto \eta(x) f(x)$.}
 $\OP_C \eta \OP_C$ is coercive for $\eta \in L_+^\infty \left( \Omega_C \right)$.
\end{proposition}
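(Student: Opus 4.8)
The plan is first to pin down what \emph{coercivity} means in this context and then to reduce the statement to an elementary $L^2$ inequality. The operator $\OP_C \eta \OP_C$ maps $L^2\left(\Omega_C;\mathbb{R}^3\right)$ into itself, but since $\OP_C$ annihilates the orthogonal complement of $\HL{C}$, it cannot be coercive on all of $L^2$; coercivity is therefore to be understood on the closed subspace $\HL{C}$ endowed with the inner product $\langle\cdot,\cdot\rangle_C$ inherited from $L^2$. Thus the target is: there exists $\alpha>0$ such that $\langle \OP_C \eta \OP_C \vv, \vv \rangle_C \geq \alpha \langle \vv,\vv\rangle_C$ for every $\vv \in \HL{C}$.

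The first step is to simplify the left-hand side. For $\vv \in \HL{C}$ we have $\OP_C \vv = \vv$, and by the defining property of $\OP_C$ recalled in \Cref{pro_ope_subsec} (namely $\langle \ww,\uu\rangle_C = \langle \OP_C \ww, \uu\rangle_C$ for all $\uu \in \HL{C}$), applied with $\ww = \eta\vv$ and $\uu = \vv$, one gets
\[
\langle \OP_C \eta \OP_C \vv, \vv \rangle_C = \langle \OP_C (\eta \vv), \vv \rangle_C = \langle \eta \vv, \vv \rangle_C = \int_{\Omega_C} \eta(\rr)\,|\vv(\rr)|^2\,\mathrm{d}\rr .
\]
The second step is to bound this quadratic form from below: since $\eta \in L_+^\infty\left(\Omega_C\right)$, its essential infimum $\eta_L$ over $\Omega_C$ is strictly positive, so $\int_{\Omega_C} \eta\,|\vv|^2 \geq \eta_L \int_{\Omega_C} |\vv|^2 = \eta_L\,\|\vv\|_C^2$, and $\alpha = \eta_L$ works. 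The analogous upper bound $\langle \eta\vv,\vv\rangle_C \leq \eta_U\,\|\vv\|_C^2$ with $\eta_U = \|\eta\|_{L^\infty}$ shows in addition that $\OP_C \eta \OP_C$ is a bounded, self-adjoint, coercive operator on $\HL{C}$, hence boundedly invertible there --- the form in which it will be used in \Cref{subsec:Inv}.

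There is no genuinely hard step here: the only points requiring care are the correct reading of \emph{coercive} as meaning coercive on $\HL{C}$ rather than on the full $L^2$ space, and the fact that membership in $L_+^\infty$ must be understood to include a strictly positive essential lower bound --- without it (e.g.\ if $\eta$ vanishes on a set of positive measure) coercivity would fail. Once these are fixed, the displayed chain above is the whole argument.
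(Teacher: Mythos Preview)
Your proof is correct and follows essentially the same approach as the paper: both reduce $\langle \OP_C \eta \OP_C \vv,\vv\rangle_C$ to $\langle \eta \vv,\vv\rangle_C$ via the projection identities and then invoke the strictly positive essential lower bound of $\eta\in L_+^\infty(\Omega_C)$ to get coercivity on $\HL{C}$. Your additional remarks on boundedness and invertibility are correct but go beyond what the paper's proof records.
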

\begin{proof}
    By definition, if $\eta \in L_+^\infty \left( \Omega_C \right)$, there exists a constant $\eta_0>0$ such that $\eta(x) \geq \eta_0 \text{ for a.e. } x \in \Omega_C$. Then
    \begin{align}
        \langle \ww_c , \OP_C \eta \OP_C \ww_c \rangle_C
                & = \langle \OP_C \ww_c , \eta \OP_C \ww_c \rangle_C
        \\
        & \geq \langle \ww_c , \eta_0 \ww_c \rangle_C.
    \end{align}
\end{proof}

\begin{lemma}
\label{prop:op_coerc}
    For any $\eta \in L_+^\infty \left( \Omega_C \right)$, the operator $\OP_C \left( \eta \OI + \lambda \OACC  \right) \OP_C$ is coercive if $\lambda > -1/\tau_1(\eta)$.
\end{lemma}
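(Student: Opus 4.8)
The plan is to diagonalize the bilinear form attached to $\OP_C\left(\eta\OI+\lambda\OACC\right)\OP_C$ in the $\eta$-orthonormal basis of generalized eigenfunctions of \eqref{eq:modes} and read off a uniform lower bound. First I would reduce to a quadratic form on $H_L(\Omega_C)$: since $\OP_C$ is the orthogonal projection onto the closed subspace $H_L(\Omega_C)$, it is self-adjoint for $\langle\cdot,\cdot\rangle_C$ and acts as the identity on $H_L(\Omega_C)$, so for $\vv\in H_L(\Omega_C)$ one has
\[
\langle\vv,\OP_C\left(\eta\OI+\lambda\OACC\right)\OP_C\vv\rangle_C=\langle\vv,\eta\vv\rangle_C+\lambda\langle\vv,\OACC\vv\rangle_C .
\]
It then suffices to bound the right-hand side below by $c\lVert\vv\rVert_C^2$ for some $c>0$ independent of $\vv$ (coercivity being understood on $H_L(\Omega_C)$, which is the space relevant to the invertibility claim of \Cref{subsec:Inv}).

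Next I would expand in the generalized eigenbasis. By the structure result recalled above and in \cite{tamburrino2021monotonicity}, the normalized solutions $\{\jj_n\}_{n\geq1}$ of \eqref{eq:modes} form a complete $\eta$-orthonormal system in $H_L(\Omega_C)$, so $\vv=\sum_n\alpha_n\jj_n$ with $\alpha_n=\langle\vv,\eta\jj_n\rangle_C$ and $\langle\vv,\eta\vv\rangle_C=\sum_n\alpha_n^2$. The operator $\OACC$ has the symmetric, weakly singular kernel $\tfrac{\mu_0}{4\pi}\lVert\rr-\mathbf{y}\rVert^{-1}$, hence it is compact and self-adjoint for $\langle\cdot,\cdot\rangle_C$; combined with \eqref{eq:modes}, which gives $\langle\OACC\jj_n,\jj_m\rangle_C=\tau_n\langle\eta\jj_n,\jj_m\rangle_C=\tau_n\delta_{nm}$, this yields $\langle\vv,\OACC\vv\rangle_C=\sum_n\tau_n\alpha_n^2$. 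Therefore
\[
\langle\vv,\left(\eta\OI+\lambda\OACC\right)\vv\rangle_C=\sum_{n=1}^{+\infty}\left(1+\lambda\tau_n\right)\alpha_n^2 .
\]

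To finish I would use that the $\tau_n$ are non-negative, ordered decreasingly and tending to $0$, so $0\leq\tau_n\leq\tau_1(\eta)$ for all $n$. If $\lambda\geq0$ then $1+\lambda\tau_n\geq1$; if $-1/\tau_1(\eta)<\lambda<0$ then $1+\lambda\tau_n\geq1+\lambda\tau_1(\eta)>0$. In either case $1+\lambda\tau_n\geq\kappa:=\min\{1,\,1+\lambda\tau_1(\eta)\}>0$, whence
\[
\langle\vv,\left(\eta\OI+\lambda\OACC\right)\vv\rangle_C\geq\kappa\sum_{n=1}^{+\infty}\alpha_n^2=\kappa\langle\vv,\eta\vv\rangle_C\geq\kappa\,\eta_0\,\lVert\vv\rVert_C^2 ,
\]
the last step using $\eta\geq\eta_0>0$ a.e. as in \Cref{prop:eta_pos}. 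This is the asserted coercivity, with constant $\kappa\eta_0$.

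The main obstacle is not the final estimate but the spectral groundwork it rests on: one must be sure that $\OACC$ restricted to $H_L(\Omega_C)$ is compact, self-adjoint and positive semidefinite, so that \eqref{eq:modes} genuinely produces a complete $\eta$-orthonormal basis with $\tau_n\geq0$, and that the identity $\langle\vv,\OACC\vv\rangle_C=\sum_n\tau_n\alpha_n^2$ holds for \emph{every} $\vv\in H_L(\Omega_C)$ and not merely on a dense subspace. These facts are imported from \cite{tamburrino2021monotonicity} and from classical properties of the Newtonian potential; once they are in hand the bound is immediate, the only remaining care being the case split $\lambda\geq0$ versus $-1/\tau_1(\eta)<\lambda<0$ needed to extract a $\lambda$-dependent but $n$-independent lower bound on $1+\lambda\tau_n$.
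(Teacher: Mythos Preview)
Your argument is correct, but it takes a genuinely different route from the paper's. The paper avoids the full spectral expansion altogether: it only invokes the Rayleigh-quotient characterization
\[
\tau_1(\eta)=\max_{\jj\in H_L(\Omega_C)\setminus\{0\}}\frac{\langle\OACC\jj,\jj\rangle_C}{\langle\eta\jj,\jj\rangle_C},
\]
picks an auxiliary $\lambda_0$ with $-1/\tau_1(\eta)<\lambda_0<\min\{0,\lambda\}$, and uses $\lambda\langle\OACC\jj,\jj\rangle_C\ge\lambda_0\langle\OACC\jj,\jj\rangle_C\ge\lambda_0\tau_1(\eta)\langle\eta\jj,\jj\rangle_C$ to get directly $\langle\vv,(\eta\OI+\lambda\OACC)\vv\rangle_C\ge(1+\lambda_0\tau_1(\eta))\langle\vv,\eta\vv\rangle_C$. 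This needs only the positive semidefiniteness of $\OACC$ and the definition of $\tau_1$, not completeness of the eigenbasis or the series identity $\langle\vv,\OACC\vv\rangle_C=\sum_n\tau_n\alpha_n^2$. Your approach is heavier in its prerequisites (and you rightly flag this as the main obstacle), but it has the merit of making the coercivity constant $\kappa\eta_0=\min\{1,1+\lambda\tau_1(\eta)\}\eta_0$ completely explicit; the paper's $\lambda_0$-trick recovers the same constant only in the limit $\lambda_0\to\min\{0,\lambda\}^-$.
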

\begin{proof}
First, we recall that (see (3.3) of \cite{tamburrino2021monotonicity})
\begin{equation}
 \tau_1(\eta) = \max_{\jj \in \HL{C} \backslash \{0\}} \frac{\langle \OACC \jj, \jj \rangle_C}{\langle \eta \jj , \jj \rangle_C}.   
\end{equation}
Therefore, for any $-1/\tau_1(\eta) < \lambda_0 < \min\{0,\lambda\}, $ it results that
\begin{align}
    \lambda \langle \OACC \jj, \jj \rangle_C \geq \lambda_0 \langle \OACC \jj, \jj \rangle_C \geq \lambda_0 \tau_1(\eta) \langle \eta \jj , \jj \rangle_C.
\end{align}
Consequently,
\begin{align}
\label{eq:pos_def}
    \left\langle \ww_c , \OP_C \left( \eta \OI + \lambda \OACC  \right) \OP_C \ww_c \right\rangle \geq \left\langle \ww_c , \eta \ww_c \right\rangle \left[ 1 + \lambda_0 \tau_1(\eta) \right].
\end{align}
Since the term in square brackets in \eqref{eq:pos_def} is a positive constant, the coercivity of $\OP_C \left( \eta \OI + \lambda \OACC  \right) \OP_C$ follows from \Cref{prop:eta_pos}.
\end{proof}

\begin{remark}
    It is possible to prove that the condition $\lambda > -1/\tau_1(\eta)$ is necessary for the coercivity of $\OP_C \left( \eta \OI + \lambda \OACC  \right) \OP_C$.
\end{remark}

\begin{lemma}
\label{prop:sf_bounded}
        For $\eta \in L_+^\infty \left( \Omega_C \right)$ and finite $\lambda$, the sesquilinear form $\OS : \HL{C} \times \HL{C} \mapsto \RR$ defined as:
        \begin{equation}
            \OS(\vv_c,\ww_c) = \langle \vv_c,\left( \eta \OI + \lambda \OACC \right) \ww_c \rangle_C       
        \end{equation}
        is bounded.
\end{lemma}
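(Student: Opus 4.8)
The plan is to establish boundedness of $\OS$ by bounding each of its two pieces separately and invoking the triangle inequality. Write $\OS(\vv_c,\ww_c) = \langle \vv_c, \eta \ww_c \rangle_C + \lambda \langle \vv_c, \OACC \ww_c \rangle_C$, so it suffices to show both $(\vv_c,\ww_c) \mapsto \langle \vv_c, \eta \ww_c \rangle_C$ and $(\vv_c,\ww_c) \mapsto \langle \vv_c, \OACC \ww_c \rangle_C$ are bounded sesquilinear forms on $\HL{C} \times \HL{C}$, since $\lambda$ is finite.

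For the first term, I would use that $\eta \in L^\infty_+(\Omega_C)$ has a finite essential supremum $\eta_U$, so by Cauchy--Schwarz in $L^2(\Omega_C;\mathbb{R}^3)$ one gets $|\langle \vv_c, \eta \ww_c \rangle_C| \le \eta_U \|\vv_c\|_C \|\ww_c\|_C$, and the $\HL{C}$-norm coincides with the $L^2$-norm on the divergence-free space (as noted in the Notations subsection). For the second term, the key point is that $\OACC$ is a bounded linear operator from $L^2(\Omega_C;\mathbb{R}^3)$ into itself: it is an integral operator with the weakly singular (Newtonian) kernel $\tfrac{\mu_0}{4\pi}\|\mathbf{y}-\mathbf{x}\|^{-1}$, whose kernel is locally integrable on the bounded domain $\Omega_C$, so by the standard Schur test (or Young's convolution inequality, since the kernel is $\|\cdot\|^{-1}$ restricted to a bounded set, which lies in $L^1_{\mathrm{loc}}$) the operator norm $\|\OACC\|_{L^2 \to L^2}$ is finite. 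Then $|\lambda \langle \vv_c, \OACC \ww_c \rangle_C| \le |\lambda|\, \|\OACC\| \, \|\vv_c\|_C \|\ww_c\|_C$. Combining the two estimates yields $|\OS(\vv_c,\ww_c)| \le \left( \eta_U + |\lambda|\,\|\OACC\| \right) \|\vv_c\|_C \|\ww_c\|_C$, which is the desired boundedness.

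The only genuine obstacle is the boundedness of $\OACC$ on $L^2$; everything else is routine Cauchy--Schwarz and the finiteness of $\eta_U$ and $|\lambda|$. I expect the paper either cites this from the references already invoked for the operator $\OACC$ (e.g. \cite{tamburrino2021monotonicity}) or gives the one-line Schur-test argument. I would therefore state the $L^2$-boundedness of $\OACC$ explicitly as the load-bearing fact, cite it, and then assemble the triangle-inequality bound; the proof is short once that fact is in hand.
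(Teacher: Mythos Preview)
Your proof is correct and follows essentially the same route as the paper: split into the $\eta$-term and the $\OACC$-term, apply Cauchy--Schwarz to each, and combine via the triangle inequality to get the bound $(\|\eta\|_\infty + |\lambda|\,\|\OACC\|)\|\vv_c\|_C\|\ww_c\|_C$. The only difference is in how the finiteness of $\|\OACC\|$ is justified: the paper records the explicit identity $\|\OACC\| = \|\eta\|_\infty\,\tau_1(\|\eta\|_\infty)$ coming from the variational characterization of $\tau_1$, whereas you invoke the standard potential-theoretic fact that the Newtonian kernel on a bounded domain defines a bounded $L^2$ operator (Schur test / Young's inequality); both are valid and the structure of the argument is otherwise identical.
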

\begin{proof}
Indeed, it results that
    \begin{align}
        \left| \OS(\vv_c,\ww_c) \right| & \leq \left| \langle \vv_c, \eta \ww_c \rangle_C \right| + \left| \lambda \right| \left|\langle \vv_c, \OACC  \ww_c \rangle_C \right|
        \\
        & \leq \Vert \vv_c \Vert \, \Vert \eta \ww_c \Vert + \left| \lambda \right| \Vert \vv_c \Vert \Vert \OACC \Vert \, \Vert \ww_c \Vert
        \\
        \label{eq:bounded_sesq}
        & \leq \Vert \vv_c \Vert \, \Vert  \ww_c \Vert \left( \Vert \eta \Vert_{\infty}+ \left| \lambda \right| \Vert \OACC \Vert \right).
    \end{align}
Therefore, the sesquilinear form $\OS$ is bounded because (i) $\eta \in L_+^\infty \left( \Omega_C \right)$ and (ii) it is possible to prove that $\Vert \OACC \Vert = \Vert \eta \Vert_{\infty} \tau_1(\Vert \eta \Vert_{\infty})$.
\end{proof}
Furthermore, we have the following invertibility result.
\begin{proposition}
\label{thm:MainRes}
    For $\eta \in L_+^\infty \left( \Omega_C \right)$ and $\lambda > - 1/\tau_1(\eta)$, the operator $\OP_C \left( \eta \OI - \tau^{-1} \OACC  \right) \OP_C$ is invertible.
\end{proposition}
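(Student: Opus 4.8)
The plan is to deduce the claim from the Lax--Milgram theorem, applied on the Hilbert space $\HL{C}$; this is legitimate because $\HL{C}$ is a closed subspace of $L^2\left(\Omega_C;\mathbb{R}^3\right)$, hence complete for the inner product $\langle\cdot,\cdot\rangle_C$. First I would make explicit that, on $\HL{C}$, the operator in question --- namely $\OP_C\left(\eta\OI+\lambda\OACC\right)\OP_C$, whose inverse is the one occurring in \eqref{eq:H_def} --- is exactly the operator canonically associated with the sesquilinear form $\OS$ of \Cref{prop:sf_bounded}. Indeed, for $\vv_c,\ww_c\in\HL{C}$ one has $\OP_C\ww_c=\ww_c$, and the defining property of $\OP_C$ gives
\[
\left\langle \vv_c,\ \OP_C\left(\eta\OI+\lambda\OACC\right)\OP_C\,\ww_c\right\rangle_C=\left\langle \vv_c,\left(\eta\OI+\lambda\OACC\right)\ww_c\right\rangle_C=\OS(\vv_c,\ww_c).
\]
So it suffices to check that $\OS$, regarded on $\HL{C}\times\HL{C}$, is bounded and coercive.

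Boundedness is \Cref{prop:sf_bounded}. Coercivity is contained in the proof of \Cref{prop:op_coerc}: fixing $-1/\tau_1(\eta)<\lambda_0<\min\{0,\lambda\}$ and combining the estimate \eqref{eq:pos_def} with \Cref{prop:eta_pos} yields
\[
\OS(\ww_c,\ww_c)\ \geq\ \eta_0\left[1+\lambda_0\tau_1(\eta)\right]\Vert\ww_c\Vert_C^2\qquad\forall\,\ww_c\in\HL{C},
\]
and the constant $\alpha:=\eta_0\left[1+\lambda_0\tau_1(\eta)\right]$ is strictly positive precisely because $\lambda_0>-1/\tau_1(\eta)$; when $\lambda\geq0$ one may instead discard the nonnegative term $\lambda\langle\OACC\ww_c,\ww_c\rangle_C\geq0$ and take $\alpha=\eta_0$.

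By Lax--Milgram it follows that for every bounded linear functional $\ell$ on $\HL{C}$ there is a unique $\ww_c\in\HL{C}$ with $\OS(\vv_c,\ww_c)=\ell(\vv_c)$ for all $\vv_c\in\HL{C}$; equivalently, $\OP_C\left(\eta\OI+\lambda\OACC\right)\OP_C:\HL{C}\to\HL{C}$ is a bijection with bounded inverse satisfying $\bigl\Vert\left[\OP_C\left(\eta\OI+\lambda\OACC\right)\OP_C\right]^{-1}\bigr\Vert\leq1/\alpha$. This is the asserted invertibility, and it is exactly what makes the definition \eqref{eq:H_def} of $\OH_\eta$ meaningful.

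I do not expect a genuine obstacle here: the analytic substance --- the coercivity bound, which rests on the variational characterization $\tau_1(\eta)=\max_{\jj}\langle\OACC\jj,\jj\rangle_C/\langle\eta\jj,\jj\rangle_C$, and the boundedness estimate --- has already been discharged in \Cref{prop:op_coerc} and \Cref{prop:sf_bounded}. The two points that deserve a line of care are (i) that the operator must be read as acting on $\HL{C}$, not on all of $L^2\left(\Omega_C;\mathbb{R}^3\right)$, where it is patently non-injective (it annihilates $\HL{C}^{\perp}$), and (ii) that the coercivity constant $\alpha$ is bounded away from $0$, which is exactly the content of the hypothesis $\lambda>-1/\tau_1(\eta)$. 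As an alternative, since the Newtonian kernel $1/\Vert\mathbf{y}-\mathbf{x}\Vert$ is symmetric and $\eta$ acts by multiplication, $\OS$ is symmetric, so one could equally conclude from the spectral theorem for the self-adjoint operator associated with $\OS$, whose spectrum lies in $[\alpha,+\infty)$ and therefore excludes $0$.
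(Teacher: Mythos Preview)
Your argument is correct and follows essentially the same route as the paper's own proof: invoke Lax--Milgram on $\HL{C}$, with boundedness supplied by \Cref{prop:sf_bounded} and coercivity by \Cref{prop:op_coerc}. Your added remarks (that the operator must be read on $\HL{C}$ rather than on all of $L^2$, the explicit coercivity constant, and the spectral-theorem alternative) are sound but not present in the paper's terse version.
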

\begin{proof}
    The operator $\OP_C \left( \eta \OI + \lambda \OACC  \right) \OP_C$ is invertible if and only if the solution of equation $\OP_C \left( \eta \OI + \lambda \OACC  \right) \OP_C \vv  = \ww_c$, i.e. the solution of problem
\begin{equation}
\label{eq:Lax_Mil}
    \left \langle \ww'_c,\left( \eta \OI  + \lambda \OACC \right) \vv \right \rangle_C   = 
    - \lambda \langle \ww'_c , \ww_c \rangle_C, \     \forall
    \ww'_c \in H_L\left(\Omega_C \right), 
\end{equation}
exists and is unique $\forall \, \ww_c \in \HL{C}$. Equation \eqref{eq:Lax_Mil} admits a unique solution thanks to the Lax-Milgram Theorem that can be invoked because the operator $\OP_C \left( \eta \OI - \tau^{-1} \OACC  \right) \OP_C$ is coercive (\Cref{prop:op_coerc}) and the related sesquilinear form $\OS$ is bounded (\Cref{prop:sf_bounded}).
\end{proof}

We have the following key Theorem concerning the existence of $\OH_\eta$, resulting from Theorem \ref{thm:MainRes}.
\begin{theorem} 
\label{cor:MainStat}
    For $\eta \in L_+^\infty \left( \Omega_C \right)$ and $\lambda > -1 / \tau_1(\eta)$, the operator $\OH_\eta$ of \eqref{eq:H_def} exists and is well defined.
\end{theorem}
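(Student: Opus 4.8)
The plan is to read off the conclusion directly from the factored expression \eqref{eq:H_def}, namely
\[
    \OH_\eta = \lambda\,\OP_S \OASC \left[ \OP_C \left( \eta \OI + \lambda \OACC  \right) \OP_C \right]^{-1} \lambda\,\OP_C \OACS,
\]
and to verify that every factor on the right-hand side is a well-defined bounded operator between the appropriate spaces under the hypotheses $\eta \in L_+^\infty(\Omega_C)$ and $\lambda > -1/\tau_1(\eta)$. The only factor whose existence is not immediate from the definitions is the central inverse $\left[ \OP_C \left( \eta \OI + \lambda \OACC  \right) \OP_C \right]^{-1}$, and this is exactly what \Cref{thm:MainRes} supplies: under these hypotheses the operator $\OP_C \left( \eta \OI + \lambda \OACC  \right) \OP_C$ is invertible on $\HL{C}$.

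First I would note that $\OACS : \HL{S} \to L^2(\Omega_C;\mathbb{R}^3)$ is bounded (it is an integral operator with weakly singular kernel $1/\lVert \mathbf{y}-\mathbf{x}\rVert$ on bounded domains), hence $\OP_C \OACS : \HL{S} \to \HL{C}$ is bounded, being the composition with the orthogonal projector $\OP_C$. Symmetrically, $\OASC : \HL{C} \to L^2(\Omega_S;\mathbb{R}^3)$ is bounded and $\OP_S \OASC : \HL{C} \to \HL{S}$ is bounded; by \Cref{rmk:sym} it is moreover the adjoint of $\OP_C \OACS$. Then I would invoke \Cref{thm:MainRes} to assert that the middle factor exists as a bounded operator from $\HL{C}$ to $\HL{C}$ (boundedness of the inverse is automatic once one knows, as in the proof of \Cref{thm:MainRes}, that the coercivity constant $1 + \lambda_0\tau_1(\eta)$ in \eqref{eq:pos_def} is strictly positive, so that the Lax--Milgram estimate bounds $\lVert \vv \rVert$ by a constant times $\lVert \ww_c \rVert$). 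Composing the three bounded maps $\HL{S} \xrightarrow{\ \OP_C\OACS\ } \HL{C} \xrightarrow{\ [\cdot]^{-1}\ } \HL{C} \xrightarrow{\ \OP_S\OASC\ } \HL{S}$ and scaling by the finite scalar $\lambda^2$ yields a well-defined bounded operator $\OH_\eta \in \mathfrak{L}(\HL{S},\HL{S})$, which is precisely the claim. Finally, because \eqref{eq:GovWF_Lap}, \eqref{eq:ER_Lap}, \eqref{eq:AR_Lap} coincide with \eqref{eq:GovWF_exp}, \eqref{eq:er1}, \eqref{eq:ar} at $p = \lambda$, the operator so constructed does represent the map $\J_S(\cdot,p) \mapsto \OP_S \E_R(\cdot,p)$, so $\OH_\eta(p)$ is meaningful for every real $p > -1/\tau_1(\eta)$.

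Strictly speaking there is essentially no obstacle left: the theorem is a corollary, and the real work was done in \Cref{prop:op_coerc}, \Cref{prop:sf_bounded}, and \Cref{thm:MainRes}. The one point that deserves a sentence rather than silence is that \eqref{eq:H_def} was derived under the \emph{hypothesis} that the central operator is invertible — so the content of this theorem is just to record that the hypothesis is fulfilled in the stated parameter range, whence the algebraic manipulation leading from \eqref{GOV_exp_Proj} to \eqref{eq:pser}--\eqref{eq:H_def} is legitimate and $\OH_\eta$ is genuinely defined. I would therefore keep the proof short: cite \Cref{thm:MainRes} for the inverse, remark that the remaining factors are bounded by the discussion in \Cref{sec:problem_statement}, and conclude.
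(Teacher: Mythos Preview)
Your proposal is correct and matches the paper's own treatment: the paper states this theorem as an immediate consequence of \Cref{thm:MainRes} without giving a separate proof, so the content is precisely that the central inverse exists under the stated hypotheses. Your additional remarks on the boundedness of the flanking factors $\OP_C\OACS$ and $\OP_S\OASC$ simply flesh out what the paper leaves implicit.
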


\section{Monotonicity of Transfer Function}
\label{sec:MonoTF}
In this section, it is proved that the operator $\eta \in L^{\infty}_+(\Omega_C) \rightarrow \OH_{\eta}(-1/\tau)$, mapping the material property $\eta$ to $\OH_{\eta}$, satisfies the Monotonicity Principle for $\tau$ negative or greater than a proper positive constant. The subscript $\eta$ in $\OH_{\eta}$ makes explicit the specific electrical resistivity on which it is evaluated \eqref{eq:H_def}.

We recall that:
\begin{itemize}
    \item[(a)] $\eta_1 \leq \eta_2$ means that
$\eta_1(x) \leq \eta_2(x) \text{ for a.e. } x \in \Omega_C$;
    \item[(b)] $\OA \preceq \OB$ means that $\OB - \OA$ is a positive semi-definite operator.
\end{itemize}

The main Theorem relies on the two following Lemmas.

\begin{lemma}
\label{lem:MonoEta}
    The mapping $\eta \in L^{\infty}\left(\Omega_C\right) \mapsto \OP_C \eta \OP_C$ is monotone, that is,
    \begin{equation}
        \eta_1 \leq \eta_2 \Longrightarrow \OP_C \eta_1 \OP_C \preceq \OP_C \eta_2 \OP_C.
    \end{equation}
\end{lemma}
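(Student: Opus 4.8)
The plan is to unwind the definitions of the Loewner order and the projection $\OP_C$, reducing the claim to a pointwise inequality on the function level. The quantity to control is $\langle \ww_c, \OP_C(\eta_2 - \eta_1)\OP_C \ww_c\rangle_C$ for an arbitrary $\ww_c \in \HL{C}$; we must show this is nonnegative.

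First I would use the defining property of $\OP_C$, namely that $\OP_C$ is self-adjoint (an orthogonal projection with respect to $\langle\cdot,\cdot\rangle_C$, since $\HL{C}$ is a closed subspace of $L^2(\Omega_C;\mathbb R^3)$). Hence, writing $\uu = \OP_C \ww_c \in L^2(\Omega_C;\mathbb R^3)$,
\begin{equation}
\langle \ww_c, \OP_C(\eta_2 - \eta_1)\OP_C \ww_c\rangle_C = \langle \OP_C \ww_c, (\eta_2 - \eta_1)\OP_C \ww_c\rangle_C = \langle \uu, (\eta_2 - \eta_1)\uu\rangle_C.
\end{equation}
Then I would invoke the hypothesis (a), that $\eta_1(x) \leq \eta_2(x)$ for a.e.\ $x \in \Omega_C$, to get the pointwise bound $(\eta_2(x) - \eta_1(x))|\uu(x)|^2 \geq 0$ a.e., and integrate:
\begin{equation}
\langle \uu, (\eta_2 - \eta_1)\uu\rangle_C = \int_{\Omega_C} \left(\eta_2(\rr) - \eta_1(\rr)\right)\,|\uu(\rr)|^2\,\mathrm d\rr \geq 0.
\end{equation}
Since $\ww_c$ was arbitrary, this shows $\OP_C \eta_2 \OP_C - \OP_C \eta_1 \OP_C$ is positive semi-definite, i.e.\ $\OP_C \eta_1 \OP_C \preceq \OP_C \eta_2 \OP_C$, which is exactly the claim.

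There is essentially no obstacle here: the lemma is a direct consequence of the fact that multiplication by a nonnegative function is a positive operator on $L^2$ and that conjugation by a fixed (self-adjoint) operator $\OP_C$ preserves the Loewner order — indeed for any bounded $T$ and any $M \succeq 0$ one has $T^* M T \succeq 0$. The only point worth a sentence is the self-adjointness of $\OP_C$, which follows from its characterization as the orthogonal projection onto $\HL{C}$ stated in \Cref{pro_ope_subsec}; everything else is the monotonicity of the integral.
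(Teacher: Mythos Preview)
Your proof is correct and essentially the same as the paper's: both reduce the claim to the pointwise inequality $\int_{\Omega_C}(\eta_2-\eta_1)|\uu|^2\,\mathrm d\rr\geq 0$. The only (harmless) difference is that the paper takes $\ww_c\in\HL{C}$ so that $\OP_C\ww_c=\ww_c$ is automatic, whereas you route through the self-adjointness of $\OP_C$; since the operator is being tested on $\HL{C}$ anyway, this extra step is not needed.
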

\begin{proof}
    The proof is trivial. Indeed, if $\eta_1(x) \leq \eta_2(x) \text{ for a.e. } x \in \Omega_C$, then it follows that
    \begin{align}
        \left \langle \ww_c , \eta_1 \ww_c \right \rangle_C & = \int_{\Omega_C} \eta_1 \left| \ww_c(x) \right|^2 \text{d}V(x)
        \\
        & \leq \int_{\Omega_C} \eta_2 \left| \ww_c(x) \right|^2 \text{d}V(x)
        \\
        & = \left \langle \ww_c , \eta_2 \ww_c \right \rangle_C, \ \forall \ww_c \in \HL{C}.
    \end{align}
\end{proof}

\begin{lemma}
\label{lm:GenProp}
    Given two linear, invertible and symmetric operators $\OA_1$ and $\OA_2$ defined on the space $X$, an operator $\OC$ defined on the space $X$, and a linear operator $\OB$ defined from the space $Y$ to the space $X$, it turns out that:
    \begin{itemize}
        \item[(a)] $\OA_1 \preceq \OA_2 \Longleftrightarrow \OA_2^{-1} \succeq \OA_1^{-1}$;
        \item[(b)] $\OA_1 \preceq \OA_2 \Longleftrightarrow \OA_1 +\OC \preceq \OA_2 +\OC$;
        \item[(c)] $\OA_1 \preceq \OA_2 \Longrightarrow \OB^{\dag} \OA_1 \OB \preceq \OB^{\dag} \OA_2 \OB$.
    \end{itemize}
\end{lemma}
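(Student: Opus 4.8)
The plan is to verify the three claims directly from the definition of the Loewner order, working at the level of the bilinear forms these operators induce. Throughout I write $\langle\cdot,\cdot\rangle$ for the inner product on $X$ (and on $Y$ where needed), and I use that $\OA_1,\OA_2$ symmetric and invertible implies their inverses are symmetric as well.

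For part (b), I would observe that for any $x\in X$,
\[
\langle x, (\OA_2+\OC)x\rangle - \langle x, (\OA_1+\OC)x\rangle = \langle x, (\OA_2-\OA_1)x\rangle,
\]
so the difference of the two "shifted" forms coincides identically with the difference of the original forms; hence one is positive semi-definite if and only if the other is. This gives the equivalence immediately, with no hypothesis on $\OC$ beyond being a linear operator on $X$ (one does not even need $\OC$ symmetric, since only the quadratic form of $\OA_2-\OA_1$ enters).

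For part (c), starting from $\OA_1\preceq\OA_2$, I would take an arbitrary $y\in Y$, set $x=\OB y\in X$, and write
\[
\langle y, \OB^{\dag}(\OA_2-\OA_1)\OB\, y\rangle = \langle \OB y, (\OA_2-\OA_1)\OB y\rangle = \langle x, (\OA_2-\OA_1)x\rangle \ge 0,
\]
where the first equality is the defining adjoint relation and the inequality is the assumed semi-definiteness of $\OA_2-\OA_1$. Since $y$ was arbitrary this shows $\OB^{\dag}\OA_1\OB\preceq\OB^{\dag}\OA_2\OB$. (This direction is only an implication, not an equivalence, because $\OB$ need not be surjective; I would note in passing that the converse fails in general.)

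Part (a) is the one requiring a little more than a one-line form manipulation, and is the step I would flag as the main obstacle. The cleanest route is to first reduce to a symmetric sandwich: since $\OA_1$ is symmetric, invertible and positive semi-definite on the relevant cone (positivity of $\OA_1$ follows because $\OA_1\preceq\OA_2$ together with invertibility forces $\OA_1\succ 0$ once one also knows $\OA_2\succ0$; in the application $\OA_i=\OP_C(\eta_i\OI+\lambda\OACC)\OP_C$ is coercive by \Cref{prop:op_coerc}), it has a symmetric invertible square root $\OA_1^{1/2}$. Then
\[
\OA_1\preceq\OA_2 \iff \OI \preceq \OA_1^{-1/2}\OA_2\OA_1^{-1/2},
\]
by applying part (c) with $\OB=\OA_1^{-1/2}$ in one direction and $\OB=\OA_1^{1/2}$ in the other. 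Now for a symmetric invertible operator $\OT:=\OA_1^{-1/2}\OA_2\OA_1^{-1/2}$ with $\OT\succeq\OI$, one has $\OT^{-1}\preceq\OI$: this follows from the spectral calculus, since $\OT\succeq\OI$ means $\sigma(\OT)\subseteq[1,\infty)$, hence $\sigma(\OT^{-1})\subseteq(0,1]$, hence $\OI-\OT^{-1}\succeq0$. Unwinding, $\OT^{-1}=\OA_1^{1/2}\OA_2^{-1}\OA_1^{1/2}$, so $\OA_1^{1/2}\OA_2^{-1}\OA_1^{1/2}\preceq\OI$, and conjugating by $\OA_1^{-1/2}$ (part (c) again) yields $\OA_2^{-1}\preceq\OA_1^{-1}$. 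The reverse implication is obtained by the symmetric argument, swapping the roles of $\OA_1$ and $\OA_2$ and of the operators with their inverses, or simply by applying the forward implication to $\OA_2^{-1}\preceq\OA_1^{-1}$ and using $(\OA_i^{-1})^{-1}=\OA_i$. The only delicate point is the positivity needed to extract the square root; in the abstract statement this should be added as a hypothesis (or the operators taken to be positive), and in every use within the paper it is guaranteed by the coercivity lemmas already proved.
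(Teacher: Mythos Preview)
Your proofs of (b) and (c) are correct and are exactly the one-line form manipulations one expects; the paper itself simply declares all three parts ``trivial'' and gives no argument, so there is no real approach to compare against.

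Your treatment of (a) is also correct, and more importantly you have put your finger on a genuine issue that the paper does not address: as stated, (a) is \emph{false} without a positivity hypothesis. A simple counterexample is $\OA_1=-\OI$, $\OA_2=\OI$ on any nontrivial $X$: then $\OA_1\preceq\OA_2$ but $\OA_1^{-1}=-\OI\not\succeq\OI=\OA_2^{-1}$. Your square-root / spectral-calculus argument is the standard route and works precisely under the additional assumption that $\OA_1$ (equivalently both $\OA_i$) is positive definite. You are right that in the only place the paper uses (a), namely the proof of \Cref{monotrans}, the operators $\OP_C(\eta_i\OI+\lambda\OACC)\OP_C$ are coercive by \Cref{prop:op_coerc}, so the needed positivity is available. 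Thus your proof is sound for the paper's purposes, and your remark that positivity should be added to the abstract hypotheses is well taken.
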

\begin{proof}
    The proofs are trivial.
\end{proof}

Hereafter, for the sake of simplicity, we define $\lambda_1(\eta)=-\tau_1^{-1}(\eta)$. It is worth noting that $\lambda_1(\eta)$ is a pole of the Transfer Operator $\OH_\eta$. In general, $-\tau_k^{-1}(\eta)$ is a pole for $\OH_\eta$, for arbitrary $k$.

The Monotonicity Principle for the Transfer Operator $\OH_\eta$ evaluated on the real axis is stated below.
\begin{theorem}\label{monotrans}
Let $\alpha,\beta \in L_+^\infty(\Omega_C)$ be two electrical resistivities, and let $\mathcal H_\alpha$ and $\mathcal H_\beta$ be defined as in \eqref{eq:H_def}, then
\begin{equation}
\label{eq:Monotonicity}
    \alpha \leq \beta \implies \OH_{\alpha} \left( \lambda \right) \preceq \OH_{\beta} \left( \lambda \right), \ \forall \lambda \in \mathbb{R} \textrm{ and } \lambda > \max\{\lambda_1(\alpha),\lambda_1(\beta)\}.
\end{equation}
\end{theorem}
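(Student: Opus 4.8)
The plan is to rewrite $\OH_\eta(\lambda)$ as a symmetric congruence of the inverse of a single $\eta$-dependent operator and then push the pointwise bound $\alpha\le\beta$ through that expression using the elementary order facts in \Cref{lem:MonoEta,lm:GenProp}. Set $\OB_\lambda:=\lambda\,\OP_C\OACS\colon\HL{S}\to\HL{C}$ and $M_\eta(\lambda):=\OP_C\left(\eta\OI+\lambda\OACC\right)\OP_C\colon\HL{C}\to\HL{C}$. Since $\OASC$ is the adjoint of $\OACS$ (\Cref{rmk:sym}) and $\OP_C,\OP_S$ are self-adjoint, one has $\lambda\,\OP_S\OASC=\OB_\lambda^{\dagger}$, so \eqref{eq:H_def} reads
\[
    \OH_\eta(\lambda)=\OB_\lambda^{\dagger}\,M_\eta(\lambda)^{-1}\,\OB_\lambda .
\]
By \Cref{cor:MainStat} (through \Cref{thm:MainRes}) this is well defined for real $\lambda>\lambda_1(\eta)$, with $M_\eta(\lambda)^{-1}$ bounded; and $M_\eta(\lambda)$ is self-adjoint ($\OP_C$ self-adjoint, $\eta$ a real multiplication operator, $\OACC$ with symmetric kernel) and, by \Cref{prop:op_coerc}, coercive — hence positive definite — for every $\lambda>\lambda_1(\eta)$.

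Fix now $\alpha\le\beta$ and $\lambda\in\mathbb R$ with $\lambda>\max\{\lambda_1(\alpha),\lambda_1(\beta)\}$, which is exactly the range in which $M_\alpha(\lambda)$ and $M_\beta(\lambda)$ are \emph{simultaneously} self-adjoint, coercive and boundedly invertible, so that the factorization above applies to both resistivities. From $\alpha\le\beta$, \Cref{lem:MonoEta} gives $\OP_C\alpha\OP_C\preceq\OP_C\beta\OP_C$; adding the common operator $\lambda\,\OP_C\OACC\OP_C$ and using \Cref{lm:GenProp}(b) yields $M_\alpha(\lambda)\preceq M_\beta(\lambda)$. Since both are positive definite and invertible, \Cref{lm:GenProp}(a) relates their inverses, and congruence by $\OB_\lambda$ — which by \Cref{lm:GenProp}(c) preserves the Loewner order — transports this comparison to $\OH_\alpha(\lambda)$ and $\OH_\beta(\lambda)$, producing \eqref{eq:Monotonicity}.

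The argument is short because the genuinely hard analysis — coercivity, the Lax--Milgram invertibility of the system operator, and hence the very existence of $\OH_\eta$ — is already in place. The step I would treat most carefully is the factorization $\OH_\eta(\lambda)=\OB_\lambda^{\dagger}M_\eta(\lambda)^{-1}\OB_\lambda$: it hinges on correctly pairing the two $\lambda$-factors and the projectors in \eqref{eq:H_def} with the adjointness $\OASC=\OACS^{\dagger}$ of \Cref{rmk:sym}, and only after this is the monotonicity reduced to the three one-line facts of \Cref{lm:GenProp}. The secondary delicate point is sign/direction bookkeeping: inversion \emph{reverses} the Loewner order in \Cref{lm:GenProp}(a), so one must follow the sign conventions in \eqref{eq:H_def} and in the definition of the measured quantity $\OP_S\E_R$ through the composition to be sure the net effect lands on the direction asserted in \eqref{eq:Monotonicity}, and one must genuinely use $\lambda>\max\{\lambda_1(\alpha),\lambda_1(\beta)\}$ — not merely $\lambda$ above one pole — so that \Cref{lm:GenProp}(a) may legitimately be invoked for the pair, its validity requiring positive-definiteness, i.e. coercivity, of both $M_\alpha(\lambda)$ and $M_\beta(\lambda)$.
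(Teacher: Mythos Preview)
Your argument is essentially the paper's own proof: factor $\OH_\eta(\lambda)=\OB_\lambda^{\dagger}M_\eta(\lambda)^{-1}\OB_\lambda$, use \Cref{lem:MonoEta} and \Cref{lm:GenProp}(b) to compare $M_\alpha(\lambda)$ with $M_\beta(\lambda)$, invert via \Cref{lm:GenProp}(a), and transport by congruence via \Cref{lm:GenProp}(c) with $\OB_\lambda=\lambda\,\OP_C\OACS$ (the paper writes $-\lambda\,\OP_C\OACS$, which is immaterial). Your explicit identification of $\OB_\lambda^{\dagger}=\lambda\,\OP_S\OASC$ through \Cref{rmk:sym}, and your flagging of the order-reversal at the inversion step, are exactly the two points on which the paper's proof is terse.
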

\begin{proof}
First, the Transfer Operators $\OH_{\alpha} (\lambda)$ and $\OH_{\beta}(\lambda)$ are well defined for $\lambda > \max\{\lambda_1(\alpha),\lambda_1(\beta)\}$, thanks to \Cref{cor:MainStat}.
    Then $\alpha \leq \beta$ implies $\OP_C \alpha \OP_C \preceq \OP_C \beta \OP_C$, thanks to \Cref{lem:MonoEta}. Then, it results that
    \begin{align}
        \OP_C \alpha \OP_C \preceq \OP_C \beta \OP_C \Longleftrightarrow \ & \OP_C (\alpha \OI +\lambda \OACC ) \OP_C \preceq \OP_C (\beta \OI +\lambda \OACC ) \OP_C
        \\
        \label{eq:MonoInt}
        \Longleftrightarrow \ & \left[ \OP_C (\alpha \OI +\lambda \OACC ) \OP_C \right]^{-1} \succeq \left[ \OP_C (\beta \OI +\lambda \OACC ) \OP_C \right]^{-1},
    \end{align}
where we have exploited (a) and (b) from \Cref{lm:GenProp}. Finally, \eqref{eq:Monotonicity} follows from         \eqref{eq:MonoInt} combined with (c) from \Cref{lm:GenProp}, for $\OA_1 = \left[ \OP_C (\beta \OI +\lambda \OACC ) \OP_C \right]^{-1}$, $\OA_2 = \left[ \OP_C (\alpha \OI +\lambda \OACC ) \OP_C \right]^{-1}$, and $\OB=-\lambda \OP_C \OACS$.

\end{proof}

\section{A Non-Iterative Imaging Method Based on Monotonicity}
\label{method_sec}
This section shows how to convert the MP of \Cref{monotrans} into an imaging method. Conversions of this type (from an MP to an imaging method) have been known since the paper \cite{tamburrino2002new}. Hereafter, it is provided for the sake of completeness and with reference to an inverse obstacle problem consisting in the reconstruction of the shape of a conductive inclusion embedded in a conductive material $\Omega_C$. 

Let $A\subset\Omega_C$ be the region occupied by an inclusion with electrical resistivity $\eta_i$ embedded in a conductive background that occupies the region $\Omega_C$. The electrical conductivity of the background is indicated as $\eta_{BG}$. The overall electrical resistivity is 
\begin{equation}\label{incluresis}
    \eta_A (x)=
    \left\lbrace
    \begin{array}{ccc}
      \eta_i   & \hbox{ in } & A,\\
        \eta_{BG} & \hbox{ in } & \Omega_C\setminus A.
    \end{array}
    \right.
\end{equation}
Hereafter, it is assumed that
\begin{equation}\label{nin0relation}
\eta_i > \eta_0.
\end{equation}
It is easy to observe that if $A_1$ and $A_2$ are two possible inclusions in $\Omega_C$, then \eqref{incluresis} and \eqref{nin0relation} imply that
\begin{equation}\label{etanin0relation}
A_1\subset A_2 \implies \eta_{A_1}\leq \eta_{A_2}.
\end{equation}
\Cref{monotrans} together with equation \eqref{etanin0relation} provides the monotonicity of the Transfer Operator for the inverse obstacle problem, as started below.
\begin{theorem}\label{monotransinclu}
Let $A_1$ and $A_2$ be two inclusions embedded in the conducting material $\Omega_C$. Under assumptions \eqref{incluresis} and \eqref{nin0relation}, it follows that
\begin{equation}
\label{eq:monotransinclu}
A_1\subset A_2 \implies \OH_{A_1} \left( \lambda \right) \preceq \OH_{A_2} \left( \lambda \right), \ \forall  \lambda \in \mathbb{R} \textrm{ and } \lambda > \max\{\lambda_1(\eta_{A_1}),\lambda_1(\eta_{A_{2}})\}.
\end{equation}
\end{theorem}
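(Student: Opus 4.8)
The plan is to obtain this statement as an immediate corollary of \Cref{monotrans}, the only work being to bridge from the set inclusion $A_1 \subset A_2$ to the pointwise ordering of the associated resistivities and to check that the hypotheses of the earlier results are met.

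First I would record that, under \eqref{incluresis} and \eqref{nin0relation}, both $\eta_{A_1}$ and $\eta_{A_2}$ belong to $L_+^\infty(\Omega_C)$: each is measurable (since $A_1, A_2 \subset \Omega_C$) and takes only the two values $\eta_i$ and $\eta_{BG}$, both strictly positive --- the former by \eqref{nin0relation}, as $\eta_0 > 0$, and the latter being the (positive) background resistivity. This is precisely what is needed to invoke \Cref{cor:MainStat} and \Cref{monotrans} for these two profiles.

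Next, assuming $A_1 \subset A_2$, equation \eqref{etanin0relation} yields $\eta_{A_1}(x) \leq \eta_{A_2}(x)$ for a.e. $x \in \Omega_C$, i.e. $\eta_{A_1} \leq \eta_{A_2}$ in the sense of item (a) at the start of \Cref{sec:MonoTF}. Writing $\OH_{A_i}$ as shorthand for $\OH_{\eta_{A_i}}$, I then apply \Cref{monotrans} with $\alpha = \eta_{A_1}$ and $\beta = \eta_{A_2}$: for every real $\lambda > \max\{\lambda_1(\eta_{A_1}), \lambda_1(\eta_{A_2})\}$ the transfer operators $\OH_{A_1}(\lambda)$ and $\OH_{A_2}(\lambda)$ are well defined and satisfy $\OH_{A_1}(\lambda) \preceq \OH_{A_2}(\lambda)$, which is exactly \eqref{eq:monotransinclu}.

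Since each step is a direct citation of a previously established result, there is no genuine obstacle here; the statement is essentially a specialization of \Cref{monotrans} to the two-phase obstacle setting. The only points that merit an explicit line are (i) verifying the positivity and measurability that place $\eta_{A_1}, \eta_{A_2}$ in $L_+^\infty(\Omega_C)$, and (ii) reconciling the notation $\OH_{A_i} = \OH_{\eta_{A_i}}$ together with the admissibility threshold, noting that the second entry of the maximum in \eqref{eq:monotransinclu} is meant to be $\lambda_1(\eta_{A_2})$.
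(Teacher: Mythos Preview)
Your proposal is correct and follows exactly the route the paper intends: the statement is presented there as an immediate consequence of \Cref{monotrans} combined with \eqref{etanin0relation}, with no separate proof given. Your added checks that $\eta_{A_1},\eta_{A_2}\in L_+^\infty(\Omega_C)$ and your observation about the typo in the second argument of the maximum are appropriate supplementary remarks.
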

Imaging the shape of an anomaly via the Monotonicity Principle can be achieved as originally proposed in \cite{tamburrino2002new}. Specifically, the following proposition holds.

\begin{proposition}
\label{monotransinclu:converse}
Let $A_1$ and $A_2$ be two inclusions embedded in the conducting material $\Omega_C$. Under assumptions \eqref{incluresis} and \eqref{nin0relation}. If there exists a real number $\lambda$ greater than $\max\{\lambda_1(\eta_{A_1}),\lambda_1(\eta_{A_1})\}$ such that $$\OH_{A_1} \left( \lambda \right) \not\preceq \OH_{A_2} \left( \lambda \right),$$ then  \Cref{monotransinclu} implies that
$$A_1 \not\subset A_2.$$
\end{proposition}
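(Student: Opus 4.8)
The plan is to recognize that \Cref{monotransinclu:converse} is nothing more than the logical contrapositive of \Cref{monotransinclu}, so no new analysis is required. Recall that \Cref{monotransinclu} asserts that, under the structural assumptions \eqref{incluresis} and \eqref{nin0relation}, the inclusion relation $A_1 \subset A_2$ forces the Loewner inequality $\OH_{A_1}(\lambda) \preceq \OH_{A_2}(\lambda)$ to hold \emph{for every} admissible $\lambda$, i.e. every real $\lambda$ exceeding $\max\{\lambda_1(\eta_{A_1}),\lambda_1(\eta_{A_2})\}$. Equivalently, the failure of this Loewner inequality at even one admissible value of $\lambda$ is incompatible with $A_1 \subset A_2$.

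First I would argue by contradiction. Assume the hypothesis of the proposition: there exists a real number $\lambda_0 > \max\{\lambda_1(\eta_{A_1}),\lambda_1(\eta_{A_2})\}$ such that $\OH_{A_1}(\lambda_0) \not\preceq \OH_{A_2}(\lambda_0)$. Suppose, for contradiction, that $A_1 \subset A_2$. Since $\lambda_0$ lies in the region of validity demanded by \Cref{monotransinclu}, that theorem applies at $\lambda_0$ and delivers $\OH_{A_1}(\lambda_0) \preceq \OH_{A_2}(\lambda_0)$, contradicting the choice of $\lambda_0$. Hence $A_1 \not\subset A_2$, which is the assertion of the proposition.

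The only point requiring attention — and it is already encoded in the statement — is that the value $\lambda_0$ at which the Loewner inequality fails must lie in the common region where both transfer operators are well defined (by \Cref{cor:MainStat}) and where the monotonicity conclusion of \Cref{monotransinclu} is in force; this is exactly why the hypothesis stipulates $\lambda_0 > \max\{\lambda_1(\eta_{A_1}),\lambda_1(\eta_{A_2})\}$. Beyond this bookkeeping there is no genuine mathematical obstacle: the statement is a purely logical restatement of \Cref{monotransinclu}, whose purpose is to make explicit that a \emph{single} measurement — the transfer operator evaluated at one admissible $\lambda$ — suffices to rule out a candidate inclusion, which is precisely the mechanism underlying the non-iterative imaging method of \Cref{method_sec}.
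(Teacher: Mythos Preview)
Your proposal is correct and matches the paper's approach exactly: the paper does not even supply a separate proof for this proposition, since the statement itself already indicates that it follows from \Cref{monotransinclu} by contraposition. Your write-up simply makes this contrapositive argument explicit, including the necessary observation that the hypothesis places $\lambda_0$ in the admissible range where \Cref{monotransinclu} applies.
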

The concept of monotonicity in transfer functions allows the development of an imaging algorithm that does not require iterative calculations. Hereafter, the key steps of the method are summarized for completeness.

First, let $\Omega_C$ be covered by means of a set of \lq\lq small\rq\rq \ subdomains called test elements, i.e., $\Omega_C = \{T_j\}_{j=1}^N$. 
Second, check via \Cref{monotransinclu:converse} if a given test domain $T_j$ is suspected not to be completely included in the unknown anomaly $A \subset \Omega_C$. If there exists a $\lambda$ such that $\OH_{T_j} \left( \lambda \right) \not\preceq \OH_{A} \left( \lambda \right)$, then discard the test domain $T_j$.
Third, take as a reconstruction of $U$ the union of those test domains that have not been discarded during the second step.

Summing up, the reconstruction rule is
\begin{equation}
\label{eqn:r0}
        A^U=\bigcup_j \left\{T_j \ |\  \OH_{T_j}(\lambda) \preceq \OH_{A}(\lambda),\ \forall \lambda > \lambda_j \right\},
\end{equation}
where $\lambda_j=\max\{\lambda_1 \left(T_j \right), \lambda_1\left(A \right) \}$ is a proper threshold.

When the unknown anomaly $A$ is the union of some or all $T_j$s, then $A^U$ is a rigorous upper bound:
\begin{equation}
    A \subseteq A^U \subseteq A^U \left( \lambda^* \right),
\end{equation}
where $A^U \left( \lambda^* \right)$ is the estimated reconstruction at the prescribed  parameter $\lambda^*$, i.e.,
\begin{equation}
\label{eqn:UB_SO}
    A^U \left( \lambda^* \right)=\bigcup_j \left\{T_j \ |\  \OH_{T_j}(\lambda^*) \preceq \OH_{A}(\lambda^*)\right\}.
\end{equation}

There is another way to estimate $A$ using the same principle, but focusing on the search for the test elements that contain the true anomaly $A$. Specifically, this reconstruction rule is 
\begin{equation}
\label{eqn:LB}
        A^L=\bigcap_j \left\{T_j \ |\  \OH_{T_j}(\lambda) \succeq \OH_{A}(\lambda),\ \forall \lambda > \lambda_j \right\}.
\end{equation}
In other words, the final reconstruction of $A$ is obtained by finding the common regions between all the test elements that might contain $A$.

When the unknown anomaly $A$ is the intersection of some of the $T_j$s, then $A^L$ is a rigorous lower bound:
\begin{equation}
    A^L \left( \lambda^* \right) \subseteq A^L \subseteq A,
\end{equation}
where $A^L \left( \lambda^* \right)$ is the estimated reconstruction at the prescribed  parameter $\lambda^*$, i.e.,
\begin{equation}
\label{eqn:LB_SO}
    A^L \left( \lambda^* \right)=\bigcap_j \left\{T_j \ |\  \OH_{T_j}(\lambda^*) \succeq \OH_{A}(\lambda^*)\right\}.
\end{equation}

\section{Conclusions}
\label{conclusion_sec}
In this paper, the inverse obstacle problem is studied within the framework of Magnetic Induction Tomography, which involves the detection of small defects in a conducting material using low-frequency electromagnetic fields. From the mathematical point of view, this is a non-linear and ill-posed inverse problem, in which the underlying physics is governed by a parabolic PDE.

The proposed strategy is based on the search for a Monotonicity Property that, when available, can be translated into a non-iterative imaging method. In this setting (parabolic PDE), it is very challenging to recognize the proper operator showing the Monotonicity Property and to prove the related monotonicity. The main contribution of this study is that a Monotonicity Property for parabolic PDEs emerges when considering the Laplace transforms of a proper quantity that can be measured, the Transfer Operator $\mathcal{H}_\eta$. Specifically, the introduction of the Laplace transform, by hiding the temporal dependence of the problem, naturally gives rise to the Transfer Operator $\mathcal{H}_\eta$, which satisfies a proper Monotonicity Principle:
\begin{equation}
\label{eq:Monotonicity_conclusion}
    \alpha \leq \beta \implies \OH_{\alpha} \left( \lambda \right) \preceq \OH_{\beta} \left( \lambda \right). 
\end{equation}
Subsequently, an imaging method based on this Monotonicity Principle provides the solution for the inverse obstacle problem in real-time operations.

\section*{Acknowledgments}
This work has been partially supported by the Italian Ministry of University and Research (projects n. 2022Y53F3X and n. 20229M52AS, PRIN 2022), and by the National Group for Mathematical Analysis, Probability and their Applications (GNAMPA), of the Italian National Institute of Higher Mathematics (INdAM).

\section*{Authorship contribution statement}

{\bf A. Tamburrino}: Conceptualization, Funding acquisition, Methodology, Supervision, Validation, and Writing.

{\bf A. Corbo Esposito and G. Piscitelli}: Conceptualization, Funding acquisition, Methodology, Validation, and Writing. 

\bibliography{mybibfile}
\bibliographystyle{abbrv}
\end{document}